\newtheorem{theorem}{Theorem}[section]
\newtheorem{thm}[theorem]{Theorem}
\newtheorem{pro}[theorem]{Proposition}
\newtheorem{defi}[theorem]{Definition}
\newtheorem{rmk}[theorem]{Remark}
\numberwithin{equation}{section}
\begin{document}

\title{Decidability of plane edge coloring with three colors}

% The thanks line in the title should be filled in if there is
% any support acknowledgement for the overall work to be included
% This \thanks is also used for the received by date info, but
% authors are not expected to provide this.

\author{Hung-Hsun Chen}
\address{Department of Applied Mathematics, National Chiao Tung University, Hsinchu 300, Taiwan}
\email{hhchen.am00g@nctu.edu.tw}

\author{Wen-Guei Hu}
\address{Department of Applied Mathematics, National Chiao Tung University, Hsinchu 300, Taiwan}
\email{wghu@mail.nctu.edu.tw}

\author{De-Jan Lai}
\address{Department of Applied Mathematics, National Chiao Tung University, Hsinchu 300, Taiwan}
\email{werre216asfe87dirk@gmail.com}

\author{Song-Sun Lin$^{\star}$}
\address{Department of Applied Mathematics, National Chiao Tung University, Hsinchu 300, Taiwan}
\email{sslin@math.nctu.edu.tw}
\thanks{$^{\star}$The author would like to thank the National Science Council, R.O.C. (Contract No. NSC 98-2115-M-009-008) for partially supporting this research.}

%\author{Hung-Hsun Chen\thanks{Department of Applied Mathematics, National Chiao Tung University, Hsinchu 300, Taiwan. (hhchen.am00g@nctu.edu.tw)}
%        \and Wen-Guei Hu\thanks{Department of Applied Mathematics, National Chiao Tung University, Hsinchu 300, Taiwan. (wghu@mail.nctu.edu.tw)}\and De-Jan Lai \thanks{Department of Applied Mathematics, National Chiao Tung University, Hsinchu 300, Taiwan. (werre216asfe87dirk@gmail.com)}\and Song-Sun Lin \thanks{Department of Applied Mathematics, National Chiao Tung University, Hsinchu 300, Taiwan. (sslin@math.nctu.edu.tw) The author would like to thank the National Science Council, R.O.C. (Contract No. NSC 98-2115-M-009-008) for partially supporting this research.} }
%

\begin{abstract}
This investigation studies the decidability problem of plane edge coloring with three symbols. In the edge coloring (or Wang tiles) of a plane, unit squares with
colored edges that have one of $p$ colors are arranged side by side such that the touching edges of the adjacent tiles have the same colors. Given a basic set $B$ of Wang tiles, the decision problem is to find an algorithm to determine whether or not $\Sigma(B)\neq\emptyset$, where $\Sigma(B)$ is the set of all global patterns on $\mathbb{Z}^{2}$ that can be constructed from the Wang tiles in $B$.

When $p\geq 5$, the problem is known to be undecidable. When $p=2$, the problem is decidable. This study proves that when $p=3$, the problem is also decidable. $\mathcal{P}(B)$ is the set of all periodic patterns on $\mathbb{Z}^{2}$ that can be generated by the tiles in $B$. If $\mathcal{P}(B)\neq\emptyset$, then $B$ has a subset $B'$ of minimal cycle generators such that $\mathcal{P}(B')\neq\emptyset$ and $\mathcal{P}(B'')=\emptyset$ for $B''\subsetneqq B'$. This study demonstrates that the set $\mathcal{C}(3)$ of all minimal cycle generators  contains $787,605$ members that can be classified into $2,906$ equivalence classes. $\mathcal{N}(3)$ is the set of all maximal non-cycle generators : if $B\in \mathcal{N}(3)$, then $\mathcal{P}(B)=\emptyset$ and $\mathcal{P}(\tilde{B})\neq\emptyset$ for $\tilde{B}\supsetneqq B$. The problem is shown to be decidable by proving that $B\in \mathcal{N}(3)$ implies $\Sigma(B)=\emptyset$. Consequently, $\Sigma(B)\neq\emptyset$ if and only if $\mathcal{P}(B)\neq\emptyset$.
\end{abstract}

\maketitle

\section{ Introduction}

Decidability problems have been studied for many years. See, for example, the recent review article of Goodman-Strauss \cite{5-1}. One of the most active areas of research into the
decidability problem is that of the plane tiling \cite{6}. This study focuses on the
decidability problem concerning plane edge coloring with three symbols.

The coloring of $\mathbb{Z}^2$ using unit squares has a long history \cite{6}. In 1961, Wang \cite{10}
started to study the square tiling of a plane to prove theorems by pattern recognition. Unit
 squares with colored edges are arranged side by side so that the touching edges of the adjacent tiles have the same color; the tiles cannot
 be rotated or reflected. Today, such tiles are called Wang tiles or Wang dominos \cite{4,6}.

The $2 \times 2$ unit square is denoted by $\mathbb{Z}_{2 \times 2}$. The set of $p$ colors is $\{0,1,\cdots,p-1\}$. Therefore, the total set of Wang tiles is denoted by
$\Sigma_{2 \times 2}(p) \equiv \{0,1,\cdots,p-1\}^{\mathbb{Z}_{2 \times 2}}$. A set $B$ of Wang tiles is called a basic set (of Wang tiles). Let $\Sigma(B)$ and $\mathcal{P}(B)$ be the sets of all global patterns and periodic patterns on $\mathbb{Z}^2$, respectively, that can be constructed from the Wang tiles in $B$.

The decision problem concerning tiling with of Wang tiles concerns the existence of an algorithm that can determine whether or not
\begin{equation}  \label{eqn:1.1}
\Sigma(B) \neq \emptyset
\end{equation}
for any finite set $B$ of Wang tiles.

Clearly, $\mathcal{P}(B) \subseteq \Sigma(B)$, meaning that if $\mathcal{P}(B) \neq \emptyset$,
then $\Sigma(B) \neq \emptyset$. In
\cite{10}, Wang conjectured that any set of tiles that can tile a plane can tile the plane periodically:
\begin{equation}  \label{eqn:1.2}
\text{ if } \Sigma(B) \neq \emptyset, \text{ then } \mathcal{P}(B) \neq \emptyset.
\end{equation}

If (\ref{eqn:1.2}) holds, then the decision problem that is spcified by (\ref{eqn:1.1}) is reduced to the much easier problem of determining whether or not
\begin{equation}  \label{eqn:1.3}
\mathcal{P} (B) \neq \emptyset.
\end{equation}

However, in 1966, Berger \cite{4} proved that Wang's conjecture was wrong and the decision problem concerning Wang's tiling is undecidable.
He presented a set $B$ of $20426$ Wang tiles that could only tile the plane aperiodically:
\begin{equation}  \label{eqn:1.4}
\begin{array}{ccc}
\Sigma(B)\neq \emptyset & \text{ and } & \mathcal{P}(B)= \emptyset.
\end{array}
\end{equation}
Later, he reduced the number of tiles to $104$. Thereafter, smaller basic sets were found
by Knuth, L\"{a}uchli, Robinson, Penrose, Ammann,  Culik and Kari. Currently, the smallest number
of tiles that can tile a plane aperiodically is $13$, with five colors: (\ref{eqn:1.4}) holds and then
(\ref{eqn:1.2}) fails for $p=5$ \cite{5}.

Recently, Hu and Lin \cite{11} showed that Wang's conjecture (\ref{eqn:1.2}) holds if $p=2$: any
set of Wang tiles with two colors that can tile a plane can tile the plane periodically.

In that study, they showed that statement (\ref{eqn:1.2})can be approached by studying how periodic
patterns can be generated from a given basic set. First, $B$ is called a cycle generator if $\mathcal{P}(B)\neq
\emptyset$; otherwise, $B$ is called a non-cycle generator. Moreover,
$B\subset\Sigma_{2\times 2}(p)$ is
called a minimal cycle generator (MCG) if $B$ is a cycle generator and
$\mathcal{P}(B')=\emptyset$ whenever $B'\subsetneq B$; $B\subset\Sigma_{2\times 2}(p)$ is called a maximal non-cycle generator (MNCG) if $B$ is a non-cycle generator
 and $\mathcal{P}(B'')\neq\emptyset$ for any
$B''\supsetneqq B$.

Given $p\geq 2$, denote
the set of all minimal cycle generators by $\mathcal{C}(p)$ and
the set of maximal non-cycle generators by $\mathcal{N}(p)$.
Clearly,
\begin{equation}  \label{eqn:1.5}
\mathcal{C}(p)\cap\mathcal{N}(p)=\emptyset.
\end{equation}
Statement (\ref{eqn:1.2}) follows for $p\geq 2$ if

\begin{equation}  \label{eqn:1.6}
\Sigma(B) \neq \emptyset \text{ for any } B \in \mathcal{N}(p)
\end{equation}
can be shown. Indeed, in \cite{11}, it is shown that $\mathcal{C}(2)$ has 38 members ;$\mathcal{N}(2)$ has nine members, and (\ref{eqn:1.6})
holds for $p=2$. This paper studies the case of $p = 3$. Now, $\mathcal{C}(3)$ and $\mathcal{N}(3)$ have close to a million members and cannot be handled manually. After the symmetry group $D_4$ of $\mathbb{Z}_{2 \times 2}$ and the permutation group $S_p$ of colors of horizontal and vertical edges, respectively, are applied, $\mathcal{C}(3)$ still contains thousands of equivalent classes. Hence, computer programs are utilized to determine $\mathcal{C}(3)$ and $\mathcal{N}(3)$ and finally (\ref{eqn:1.6}) is shown to hold for $p = 3$. Therefore, the problem (\ref{eqn:1.1}) is decidable for $p=3$.

For $p = 4$, $\mathcal{C}(4)$ is enormous. Therefore, the arguments and the computer program need to be much efficient to handle this situation.

Corner coloring with $p = 3$ can be treated similarly. The result will be presented elsewhere.

The rest of paper is arranged as follows. Section 2 introduces the ordering matrix of all 81 local patterns and
classifies them into three groups. The recurrence formula for patterns on $\Sigma_{m \times n}$ are derived. It
is important in proving (\ref{eqn:1.6}) - that the maximum non-cycle generators cannot generate global patterns.
Section 3 will introduce the procedure for determining the sets $\mathcal{C}(3)$ and $\mathcal{N}(3)$. The main result is proven using a computer.

\section{Preliminary }
\setcounter{equation}{0}
This section introduces all necessary elements for proving (\ref{eqn:1.6}). First, let $\Sigma_{m \times n}(B)$ be the set of all local patterns on $\mathbb{Z}_{m \times n}$ that can be  generated by $B$.
Clearly,

\begin{equation}  \label{eqn:2.1-0}
\text{if } \Sigma_{m \times n}( B ) = \emptyset \text{ for some } m, n \geq 2, \text{ then }
\Sigma( B ) = \emptyset.
\end{equation}

\subsection{ Symmetries }
The symmetry of the unit square $\mathbb{Z}_{2 \times 2}$ is introduced. The symmetry group of the rectangle $\mathbb{Z}_{2 \times 2}$ is $D_4$, which is the dihedral group of order eight. The group $D_4$ is generated by rotation $\rho$ through $\frac{\pi}{2}$ and reflection $m$ about the $y$-axis. Denote the elements of $D_4$ by $D_4 = \{ I, \rho, \rho^2, \rho^3, m, m\rho, m\rho^2, m\rho^3 \}$.

\begin{equation*}
\psfrag{a}{\tiny{$m \rho$}}
\psfrag{b}{\tiny{$m \rho^2$}}
\psfrag{c}{\tiny{$m \rho^3$}}
\psfrag{d}{\tiny{$m$}}
\psfrag{e}{\tiny{$\rho$}}
\includegraphics[scale=0.6]{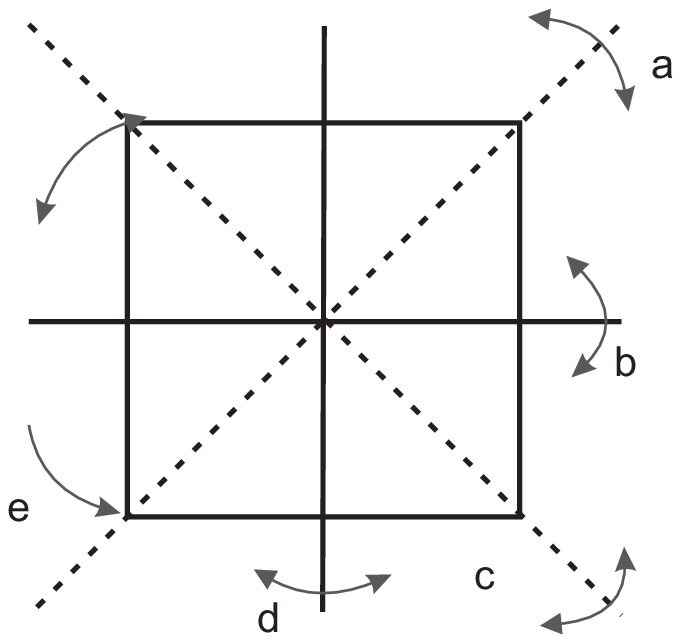}
\end{equation*}

\begin{equation*}
\text{Figure 2.1}
\end{equation*}
Therefore, given a basic set $B \subset \Sigma_{2 \times 2}(p)$ and any element $\tau \in D_4$, another basic set $(B)_\tau$ can be obtained by transforming the local patterns in $B$ by $\tau$.

Additionally, consider the permutation group $S_p$ on $\{0,1,\cdots, p-1\}$. If $\eta \in S_p$ and $\eta(0) = i_0, \eta(1) = i_1, \cdots, \eta(p-1) = i_{p-1}$, we write
$$
\eta = \left( \begin{array}{cccc}
              0 & 1 & \cdots & p-1 \\ i_0 & i_1 & \cdots & i_{p-1}
              \end{array}  \right).
$$
For $\eta \in S_p$ and $B \in \Sigma_{2 \times 2}(p)$, another basic set $(B)_{\eta}$ can be obtained.

In edge coloring, the permutations of colors in the horizontal and vertical directions are mutually independent. Denote the permutations of colors in the horizontal and vertical edges by $\eta_h$ $\in$ $S_p$ and $\eta_v$ $\in$ $S_p$, respectively. Now, for any $B$ $\subset$ $\Sigma_{2 \times 2}(p)$, define the equivalence class $[B]$ of $B$ by

\begin{equation} \label{eqn:2.2}
[B] = \{ B' \subset \Sigma_{ 2 \times 2}(p) : B' =
(((B)_{\tau})_{\eta_h})_{\eta_v}, \tau \in D_4 \mbox{ and } \eta_h, \eta_v \in ~ S_p \}.
\end{equation}

In \cite{11}, whether or not $\Sigma(B)=\emptyset$ and $\mathcal{P}(B)=\emptyset$ is shown to be independent of the
choice of elements in $[B]$. Indeed, for any
$B'\in[B]$,

\begin{equation*}
\begin{array}{ccc}
\Sigma(B')\neq \emptyset \hspace{0.2cm} (\text{or }\mathcal{P}(B')\neq \emptyset)& \text{if and only if} & \Sigma(B)\neq \emptyset\hspace{0.2cm} (\text{or }\mathcal{P}(B)\neq \emptyset).
\end{array}
\end{equation*}
Moreover, for $B'\in[B]$, $B'$ is an MCG (MNCG) if and only if $B$ is an MCG (MNCG). Therefore, groups $D_4$ and $S_{3}$ can be used efficiently to reduce the number of cases $B\subset\Sigma_{2\times 2}(3)$ that must be considered, greatly reducing the computation time.

\subsection{ Ordering Matrix }

Now, the case $p = 3$ is considered. The vertical ordering matrix $\mathbf{Y}_{2 \times 2} = \left[ y_{i,j} \right]_{9\times 9}$ of all local patterns in $\Sigma_{2 \times 2}(p)$ is given by

\begin{eqnarray}
\mathbf{Y}_{2\times 2} &=&
\begin{array}{c}
\includegraphics[scale=0.4]{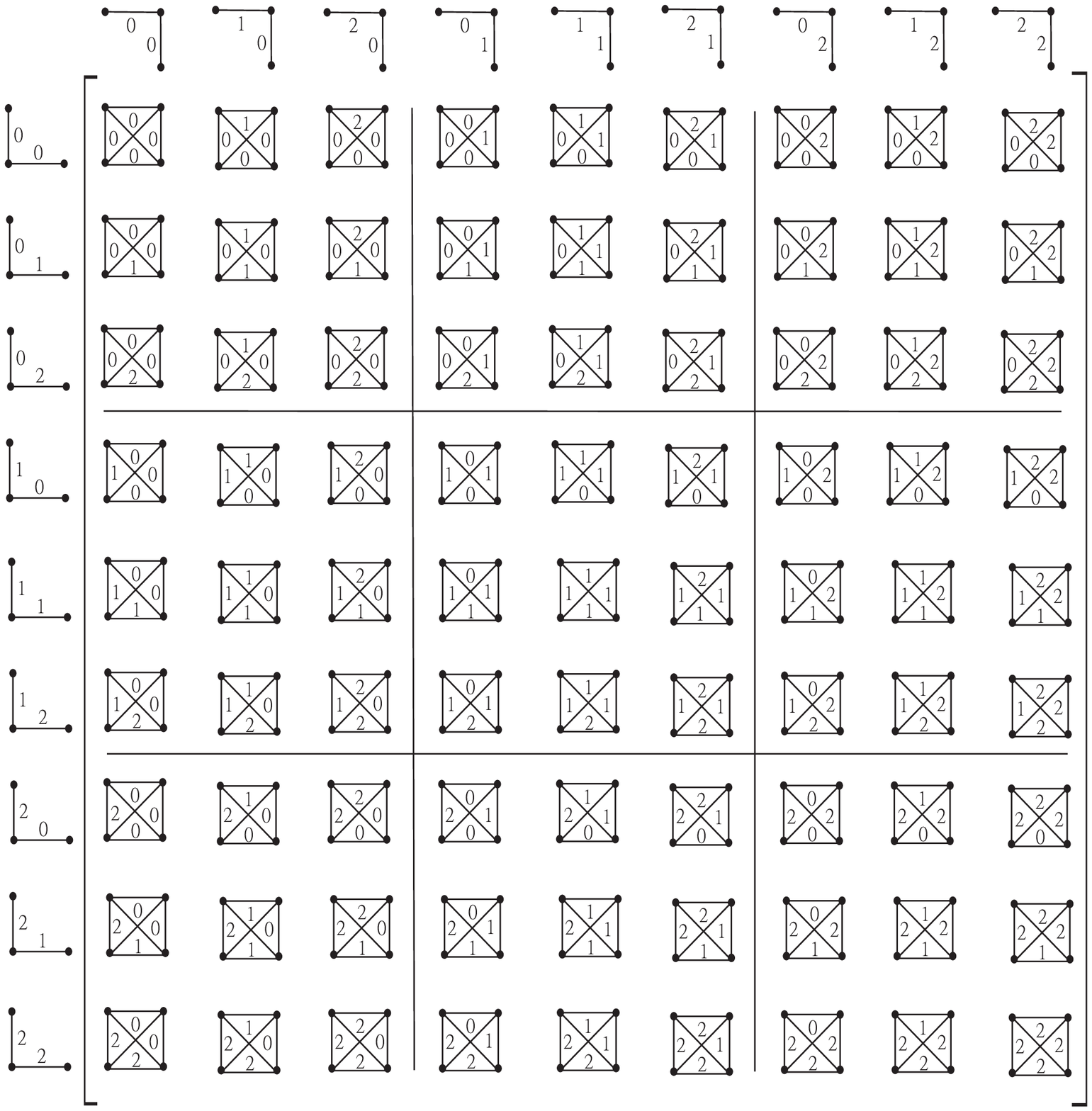}
\end{array} \\
%&=& \left[
%\begin{array}{ccccccccc}
%y_{1,1} & y_{1,2} & y_{1,3} \hspace{0.1cm}\vline & y_{1,4} &y_{1,5} & y_{1,6} \hspace{0.1cm}\vline & y_{1,7} & y_{1,8} & y_{1,9}\\
%y_{2,1} & y_{2,2} & y_{2,3} \hspace{0.1cm}\vline & y_{2,4} &y_{2,5} & y_{2,6} \hspace{0.1cm}\vline & y_{2,7} & y_{2,8} & y_{2,9} \\
%y_{3,1} & y_{3,2} & y_{3,3} \hspace{0.1cm}\vline & y_{3,4} &y_{3,5} & y_{3,6} \hspace{0.1cm}\vline & y_{3,7} & y_{3,8} & y_{3,9}\\
%\hline
%y_{4,1} & y_{4,2} & y_{4,3} \hspace{0.1cm}\vline & y_{4,4} &y_{4,5} & y_{4,6} \hspace{0.1cm}\vline & y_{4,7} & y_{4,8} & y_{4,9}\\
%y_{5,1} & y_{5,2} & y_{5,3} \hspace{0.1cm}\vline & y_{5,4} &y_{5,5} & y_{5,6} \hspace{0.1cm}\vline & y_{5,7} & y_{5,8} & y_{5,9}\\
%y_{6,1} & y_{6,2} & y_{6,3} \hspace{0.1cm}\vline & y_{6,4} &y_{6,5} & y_{6,6} \hspace{0.1cm}\vline & y_{6,7} & y_{6,8} & y_{6,9}\\
%\hline
%y_{7,1} & y_{7,2} & y_{7,3} \hspace{0.1cm}\vline & y_{7,4} &y_{7,5} & y_{7,6} \hspace{0.1cm}\vline & y_{7,7} & y_{7,8} & y_{7,9}\\
%y_{8,1} & y_{8,2} & y_{8,3} \hspace{0.1cm}\vline & y_{8,4} &y_{8,5} & y_{8,6} \hspace{0.1cm}\vline & y_{8,7} & y_{8,8} & y_{8,9}\\
%y_{9,1} & y_{9,2} & y_{9,3} \hspace{0.1cm}\vline & y_{9,4} &y_{9,5} & y_{9,6} \hspace{0.1cm}\vline & y_{9,7} & y_{9,8} & y_{9,9}\\
%\end{array}
%\right] \\
&=& \left[
\begin{array}{ccc}
\mathbf{Y}_{2;1} & \mathbf{Y}_{2;2} & \mathbf{Y}_{2;3} \\
\mathbf{Y}_{2;4} & \mathbf{Y}_{2;5} & \mathbf{Y}_{2;6} \\
\mathbf{Y}_{2;7} & \mathbf{Y}_{2;8} & \mathbf{Y}_{2;9} \\
\end{array}
\right]_{3 \times 3}
\end{eqnarray}

The recurrence relation of $\mathbf{Y}_{m+1}$ is easily obtained as follows.
Denote by
\begin{eqnarray} \label{eqn:2.5}
\mathbf{Y}_{2} = \underset{i=1}{\overset{9}{\sum}}\hspace{0.1cm} \mathbf{Y}_{2;i}
\end{eqnarray}
and
\begin{eqnarray} \label{eqn:2.6}
\mathbf{Y}_{2;i} = \left[ \begin{array}{c} y_{2;i;p,q} \end{array} \right]_{3 \times 3},
\end{eqnarray}
where
\begin{equation*}
y_{2;i;p,q} =
\begin{array}{c}
\psfrag{a}{\tiny{$p-1$}}
\psfrag{b}{\tiny{$q-1$}}
\psfrag{c}{\tiny{$\alpha_{1}$}}
\psfrag{d}{\tiny{$\alpha_{2}$}}
\includegraphics[scale=0.4]{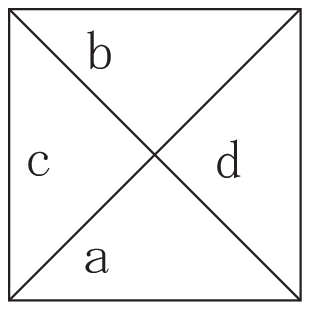}
\end{array}
\end{equation*}
and $i = 1 + \alpha_1 \cdot 3^1 + \alpha_2 \cdot 3^0$, $\alpha_i \in \{ 0, 1, 2 \}$.
For $m \geq 2$, denote by
\begin{eqnarray} \label{eqn:2.7}
\mathbf{Y}_{m+1} = \underset{i=1}{\overset{9}{\sum}}\hspace{0.1cm}  \mathbf{Y}_{m+1;i}
\end{eqnarray}
and
\begin{eqnarray} \label{eqn:2.8}
\mathbf{Y}_{m+1;i}= \left[ \begin{array}{c} y_{m+1;i;p,q} \end{array} \right]_{3^{m} \times 3^{m}},
\end{eqnarray}
where $y_{m+1;i;p,q}$ is the set of all patterns of the form
\begin{equation*}
\begin{array}{rcl}
\psfrag{a}{$\bullet$}
\psfrag{b}{\tiny{$\alpha$}}
\psfrag{c}{\tiny{$\beta$}}
\psfrag{e}{$\cdots$}
\psfrag{m}{\tiny{$m$ tiles}}
\psfrag{f}{\tiny{$p_{1}$}}
\psfrag{g}{\tiny{$p_{2}$}}
\psfrag{h}{\tiny{$p_{m}$}}
\psfrag{k}{\tiny{$q_{1}$}}
\psfrag{l}{\tiny{$q_{2}$}}
\psfrag{n}{\tiny{$q_{m}$}}
\includegraphics[scale=0.45]{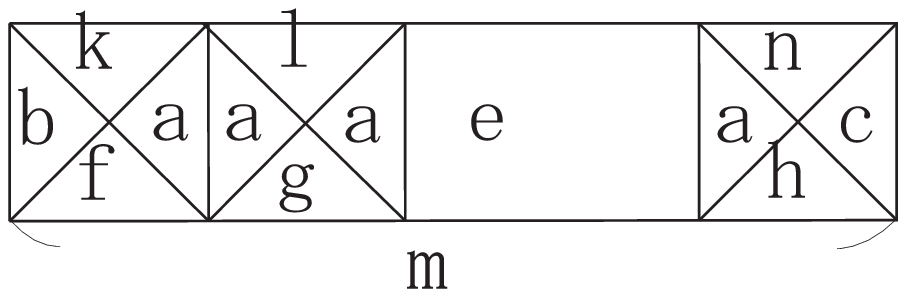}
\end{array}
\end{equation*}
where $\alpha$ ,$\beta$, $p_{k}$ and $q_{k}\in\{0,1,2\}$, $1\leq k\leq m$, such that
\begin{equation} \label{eqn:2.9}
\left\{
\begin{array}{l}
i = 1 + \alpha\cdot3^{1} + \beta\cdot  3^{0} \\
p= 1 + \underset{k=1}{\overset{m}{\sum}}p_{k}3^{m-k} \\
q= 1 + \underset{k=1}{\overset{m}{\sum}}q_{k}3^{m-k} \\
\end{array}
\right.
\end{equation}
and  $\bullet \in \{0, 1, 2 \}$. Therefore, when $i =1,4,7$,
\begin{eqnarray*}
\scriptsize{
\mathbf{Y}_{m+1;i}
= \left[
\begin{array}{ccc}
\underset{j=1}{\overset{3}{\sum}} y_{2;j+i-1;1,1} \mathbf{Y}_{m;3j-2} &
\underset{j=1}{\overset{3}{\sum}} y_{2;j+i-1;1,2} \mathbf{Y}_{m;3j-2} &
\underset{j=1}{\overset{3}{\sum}} y_{2;j+i-1;1,3} \mathbf{Y}_{m;3j-2} \\
\underset{j=1}{\overset{3}{\sum}} y_{2;j+i-1;2,1} \mathbf{Y}_{m;3j-2} &
\underset{j=1}{\overset{3}{\sum}} y_{2;j+i-1;2,2} \mathbf{Y}_{m;3j-2} &
\underset{j=1}{\overset{3}{\sum}} y_{2;j+i-1;2,3} \mathbf{Y}_{m;3j-2} \\
\underset{j=1}{\overset{3}{\sum}} y_{2;j+i-1;3,1} \mathbf{Y}_{m;3j-2} &
\underset{j=1}{\overset{3}{\sum}} y_{2;j+i-1;3,2} \mathbf{Y}_{m;3j-2} &
\underset{j=1}{\overset{3}{\sum}} y_{2;j+i-1;3,3} \mathbf{Y}_{m;3j-2} \\
\end{array}
\right]_{3^{m} \times 3^{m}}
};
\end{eqnarray*}
when $i =2,5,8$,
\begin{eqnarray*}
\scriptsize{
\mathbf{Y}_{m+1;i}
= \left[
\begin{array}{ccc}
\underset{j=1}{\overset{3}{\sum}} y_{2;j+i-2;1,1} \mathbf{Y}_{m;3j-1} &
\underset{j=1}{\overset{3}{\sum}} y_{2;j+i-2;1,2} \mathbf{Y}_{m;3j-1} &
\underset{j=1}{\overset{3}{\sum}} y_{2;j+i-2;1,3} \mathbf{Y}_{m;3j-1} \\
\underset{j=1}{\overset{3}{\sum}} y_{2;j+i-2;2,1} \mathbf{Y}_{m;3j-1} &
\underset{j=1}{\overset{3}{\sum}} y_{2;j+i-2;2,2} \mathbf{Y}_{m;3j-1} &
\underset{j=1}{\overset{3}{\sum}} y_{2;j+i-2;2,3} \mathbf{Y}_{m;3j-1} \\
\underset{j=1}{\overset{3}{\sum}} y_{2;j+i-2;3,1} \mathbf{Y}_{m;3j-1} &
\underset{j=1}{\overset{3}{\sum}} y_{2;j+i-2;3,2} \mathbf{Y}_{m;3j-1} &
\underset{j=1}{\overset{3}{\sum}} y_{2;j+i-2;3,3} \mathbf{Y}_{m;3j-1} \\
\end{array}
\right]_{3^{m} \times 3^{m}}
};
\end{eqnarray*}
when $i =3,6,9$,
\begin{eqnarray*}
\scriptsize{
\mathbf{Y}_{m+1;i}
= \left[
\begin{array}{ccc}
\underset{j=1}{\overset{3}{\sum}} y_{2;j+i-3;1,1} \mathbf{Y}_{m;3j} &
\underset{j=1}{\overset{3}{\sum}} y_{2;j+i-3;1,2} \mathbf{Y}_{m;3j} &
\underset{j=1}{\overset{3}{\sum}} y_{2;j+i-3;1,3} \mathbf{Y}_{m;3j} \\
\underset{j=1}{\overset{3}{\sum}} y_{2;j+i-3;2,1} \mathbf{Y}_{m;3j} &
\underset{j=1}{\overset{3}{\sum}} y_{2;j+i-3;2,2} \mathbf{Y}_{m;3j} &
\underset{j=1}{\overset{3}{\sum}} y_{2;j+i-3;2,3} \mathbf{Y}_{m;3j} \\
\underset{j=1}{\overset{3}{\sum}} y_{2;j+i-3;3,1} \mathbf{Y}_{m;3j} &
\underset{j=1}{\overset{3}{\sum}} y_{2;j+i-3;3,2} \mathbf{Y}_{m;3j} &
\underset{j=1}{\overset{3}{\sum}} y_{2;j+i-3;3,3} \mathbf{Y}_{m;3j} \\
\end{array}
\right]_{3^{m} \times 3^{m}}
}.
\end{eqnarray*}

Given $B\subset\Sigma_{2\times 2}(3)$, the
associated vertical transition matrix $\mathbf{V}_{2\times 2}(B)$ is defined by
$\mathbf{V}_{2\times 2}(B)=\left[ v_{i,j} \right]$, where $v_{i,j}=1$ if and
only if $y_{i,j}\in B$.

The recurrence formula for a higher-order vertical transition matrix can be obtained as follows. Denote by
\begin{eqnarray*}
\mathbf{V}_{2}  = \underset{i=1}{\overset{9}{\sum}} \mathbf{V}_{2;i},
\end{eqnarray*}
with
\begin{eqnarray*}
\mathbf{V}_{2;i} &=& \left[ \begin{array}{c} v_{2;i;p,q} \end{array} \right]_{3 \times 3}.
\end{eqnarray*}
For $m \geq 2$, denote by
\begin{eqnarray*}
\mathbf{V}_{m+1} = \underset{i=1}{\overset{9}{\sum}} \mathbf{V}_{m+1;i}.
\end{eqnarray*}
Now, for $i =1,4,7$,
\begin{eqnarray*}
\scriptsize{
\mathbf{V}_{m+1;i}
= \left[
\begin{array}{ccc}
\underset{j=1}{\overset{3}{\sum}} v_{2;j+i-1;1,1} \mathbf{V}_{m;3j-2} &
\underset{j=1}{\overset{3}{\sum}} v_{2;j+i-1;1,2} \mathbf{V}_{m;3j-2} &
\underset{j=1}{\overset{3}{\sum}} v_{2;j+i-1;1,3} \mathbf{V}_{m;3j-2} \\
\underset{j=1}{\overset{3}{\sum}} v_{2;j+i-1;2,1} \mathbf{V}_{m;3j-2} &
\underset{j=1}{\overset{3}{\sum}} v_{2;j+i-1;2,2} \mathbf{V}_{m;3j-2} &
\underset{j=1}{\overset{3}{\sum}} v_{2;j+i-1;2,3} \mathbf{V}_{m;3j-2} \\
\underset{j=1}{\overset{3}{\sum}} v_{2;j+i-1;3,1} \mathbf{V}_{m;3j-2} &
\underset{j=1}{\overset{3}{\sum}} v_{2;j+i-1;3,2} \mathbf{V}_{m;3j-2} &
\underset{j=1}{\overset{3}{\sum}} v_{2;j+i-1;3,3} \mathbf{V}_{m;3j-2} \\
\end{array}
\right]_{3^{m} \times 3^{m}}
};
\end{eqnarray*}
for $i =2,5,8$,
\begin{eqnarray*}
\scriptsize{
\mathbf{V}_{m+1;i}
= \left[
\begin{array}{ccc}
\underset{j=1}{\overset{3}{\sum}} v_{2;j+i-2;1,1} \mathbf{V}_{m;3j-1} &
\underset{j=1}{\overset{3}{\sum}} v_{2;j+i-2;1,2} \mathbf{V}_{m;3j-1} &
\underset{j=1}{\overset{3}{\sum}} v_{2;j+i-2;1,3} \mathbf{V}_{m;3j-1} \\
\underset{j=1}{\overset{3}{\sum}} v_{2;j+i-2;2,1} \mathbf{V}_{m;3j-1} &
\underset{j=1}{\overset{3}{\sum}} v_{2;j+i-2;2,2} \mathbf{V}_{m;3j-1} &
\underset{j=1}{\overset{3}{\sum}} v_{2;j+i-2;2,3} \mathbf{V}_{m;3j-1} \\
\underset{j=1}{\overset{3}{\sum}} v_{2;j+i-2;3,1} \mathbf{V}_{m;3j-1} &
\underset{j=1}{\overset{3}{\sum}} v_{2;j+i-2;3,2} \mathbf{V}_{m;3j-1} &
\underset{j=1}{\overset{3}{\sum}} v_{2;j+i-2;3,3} \mathbf{V}_{m;3j-1} \\
\end{array}
\right]_{3^{m} \times 3^{m}}
};
\end{eqnarray*}
for $i =3,6,9$,
\begin{eqnarray*}
\scriptsize{
\mathbf{V}_{m+1;i}
= \left[
\begin{array}{ccc}
\underset{j=1}{\overset{3}{\sum}} v_{2;j+i-3;1,1} \mathbf{V}_{m;3j} &
\underset{j=1}{\overset{3}{\sum}} v_{2;j+i-3;1,2} \mathbf{V}_{m;3j} &
\underset{j=1}{\overset{3}{\sum}} v_{2;j+i-3;1,3} \mathbf{V}_{m;3j} \\
\underset{j=1}{\overset{3}{\sum}} v_{2;j+i-3;2,1} \mathbf{V}_{m;3j} &
\underset{j=1}{\overset{3}{\sum}} v_{2;j+i-3;2,2} \mathbf{V}_{m;3j} &
\underset{j=1}{\overset{3}{\sum}} v_{2;j+i-3;2,3} \mathbf{V}_{m;3j} \\
\underset{j=1}{\overset{3}{\sum}} v_{2;j+i-3;3,1} \mathbf{V}_{m;3j} &
\underset{j=1}{\overset{3}{\sum}} v_{2;j+i-3;3,2} \mathbf{V}_{m;3j} &
\underset{j=1}{\overset{3}{\sum}} v_{2;j+i-3;3,3} \mathbf{V}_{m;3j} \\
\end{array}
\right]_{3^{m} \times 3^{m}}
}.
\end{eqnarray*}
Therefore, as in \cite{11}, it can be proven that
\begin{eqnarray} \label{eqn:2.10}
\left|\Sigma_{(m+1) \times n}(B)\right| = \left| \mathbf{V}^{n-1}_{m+1} \right|.
\end{eqnarray}

\subsection{Periodic Patterns}

This subsection studies periodic patterns in detail.

For $m,n\geq1$, a global pattern $u = (\alpha_{i, j})_{i, j \in \mathbb{Z}}$ on $\mathbb{Z}^2$ is called $(m,n)$-periodic if every $i, j \in \mathbb{Z}$,
\begin{equation} \label{eqn:2.11}
\alpha_{i + mp ,j + nq} = \alpha_{i, j}.
\end{equation}
for all $p, q \in \mathbb{Z}$.

Let $\mathcal{P}_{B}(m,n)$ be the set of all
$(m,n)$-periodic patterns and $B$-admissible patterns. Let $\Gamma_{B}(m,n)=|\mathcal{P}_{B}(m,n)|$ be the number of all $(m,n)$-periodic and $B$-admissible patterns.

As in \cite{3}, $\mathcal{P}_{B}(m,n)$ can be expressed by trace
operators as follows.
%
%\begin{defi}
%For $p = 3$, $m \geq 1$, the rotational matrix $\mathcal{R}_m = [r_{m; i, j}]$ is defined by
%
%
%\begin{itemize}
%    \item[(i)]  for $1 \leq i \leq 3^{m-1}$,
%\begin{equation}
%\left\{
%\begin{array}{ll}
%r_{m; i, 3i-2} =1, & \\
%r_{m; 3^{m-1}+i, 3i-1} = 1, & \\
%r_{m; 2*3^{m-1}+i, 3i} = 1; &
%\end{array}
%\right.
%\end{equation}
%
%\item[(ii)] otherwise, $r_{m; i, j} = 0$.
%\end{itemize}
%%
%%
%%\begin{equation}
%%\left\{
%%\begin{array}{ll}
%%r_{m; i, 3i-2} =1, & \text{for } 1 \leq i \leq 3^{m-1}, \\
%%r_{m; 3^{m-1}+i, 3i-1} = 1, & \text{for } 1 \leq i \leq 3^{m-1}, \\
%%r_{m; 2*3^{m-1}+i, 3i} = 1, & \text{for } 1 \leq i \leq 3^{m-1}, \\
%%r_{m; i, j} = 0,  & \text{otherwise.}
%%\end{array}
%%\right.
%%\end{equation}
%\end{defi}
%
%\begin{ex}
%$\newline$
%$\mathcal{R}_1 = $, \\
%$\mathcal{R}_2 = $.
%\end{ex}

From (\ref{eqn:2.9}), the periodic patterns in $\mathbf{Y}_{m+1}$ are given by $\mathbf{Y}_{m+1;i}$, $i = 1, 5, 9$. Define
\begin{equation} \label{eqn:2.12}
\mathbf{T}_m \equiv \sum_{i = 1, 5, 9} \mathbf{V}_{m+1; i}.
\end{equation}
$\mathbf{T}_{m}$ is called the trace operator of order $m$, as in \cite{3}.
Therefore, the following result is obtained.

\begin{pro}
\label{proposition:2.1}
Given $B\subseteq\Sigma_{2\times 2}(3)$, for $m,n\geq1$,
\begin{equation} \label{eqn:2.13}
\Gamma_B (m,n) = tr(\mathbf{T}_m^n ).
\end{equation}
\end{pro}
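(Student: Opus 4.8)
The plan is to read $\mathbf{T}_m$ as a transfer matrix that lays down one horizontal row of $m$ tiles at a time while already enforcing periodicity in the horizontal direction, and then to obtain vertical periodicity by taking the trace of its $n$-th power. This runs parallel to the derivation of the free-boundary count (\ref{eqn:2.10}), in which $|\Sigma_{(m+1)\times n}(B)|$ is the sum of all entries of $\mathbf{V}_{m+1}^{\,n-1}$; here both boundaries are made periodic, which replaces the full matrix $\mathbf{V}_{m+1}$ by its periodic part $\mathbf{T}_m$ and replaces the sum of all entries by the trace.

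First I would fix the basic correspondence. An $(m,n)$-periodic pattern is determined by its restriction to a fundamental domain, which may be taken as an $m\times n$ block of tiles subject to wrap-around matching: the rightmost vertical edges are identified with the leftmost ones and the top horizontal edges with the bottom ones. Hence $\Gamma_B(m,n)$ counts the $B$-admissible tilings of the $m\times n$ torus. I then slice such a tiling into its $n$ horizontal rows: for $0\leq j\leq n$ let $s_j\in\{1,\ldots,3^m\}$ encode, via (\ref{eqn:2.9}), the $m$-tuple of colors along the $j$-th horizontal grid line, so that row $j$ is a strip of $m$ tiles with bottom state $s_{j-1}$ and top state $s_j$ of exactly the kind tabulated by $\mathbf{V}_{m+1}$.

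The heart of the argument is to identify the entries of $\mathbf{T}_m$. A strip of $m$ tiles counted by $\mathbf{V}_{m+1;i}$ has its leftmost and rightmost vertical boundary edges equal to the pair $(\alpha,\beta)$ determined by $i=1+\alpha\cdot 3+\beta$, while its interior vertical edges are matched and summed over inside $\mathbf{V}_{m+1}$. Such a row is periodic in the horizontal direction precisely when its right boundary color equals its left boundary color, i.e. when $\alpha=\beta$, which by (\ref{eqn:2.9}) means $i\in\{1,5,9\}$. Consequently $\sum_{i\in\{1,5,9\}}(\mathbf{V}_{m+1;i})_{p,q}=(\mathbf{T}_m)_{p,q}$ is exactly the number of $B$-admissible, horizontally periodic rows with bottom state $p$ and top state $q$. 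I would check that this gives a genuine bijection between horizontally periodic rows and pairs consisting of a common boundary color $\alpha=\beta$ together with a compatible interior filling, so that no configuration is over- or under-counted.

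Finally I would impose vertical periodicity. Stacking the $n$ rows around the vertical cycle forces $s_n=s_0$, and the number of tilings realizing a prescribed cyclic state-sequence is the product of the corresponding $\mathbf{T}_m$ entries, so
\[
\Gamma_B(m,n)=\sum_{s_0,\ldots,s_{n-1}}\prod_{j=1}^{n}(\mathbf{T}_m)_{s_{j-1},s_j}\Big|_{s_n=s_0}=\sum_{s_0}(\mathbf{T}_m^n)_{s_0,s_0}=tr(\mathbf{T}_m^n),
\]
which is (\ref{eqn:2.13}). I expect the only real difficulty to lie in the previous step: verifying rigorously that the index condition $i\in\{1,5,9\}$ characterizes exactly the horizontally periodic rows, and that the map sending a torus tiling to its cyclic sequence of states together with a choice of row at each level is a bijection. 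Once these bookkeeping points are settled, the passage to the trace is the standard transfer-matrix identity and needs no further computation.
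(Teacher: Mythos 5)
Your proposal is correct and is exactly the argument the paper intends: the paper itself omits the details (deferring to the analogous corner-coloring result in its reference [2]), and the trace-operator formalism it sets up is designed for precisely the two steps you carry out, namely that $i\in\{1,5,9\}$ in (\ref{eqn:2.9}) characterizes the strips with equal left and right boundary colors $\alpha=\beta$ (horizontal periodicity), and that cyclically stacking such rows with matching top/bottom states yields $\Gamma_B(m,n)=tr(\mathbf{T}_m^n)$ by the standard transfer-matrix identity. No gap; your write-up simply supplies the bookkeeping the paper leaves out.
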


\begin{proof}
The proof is similar to that for corner coloring in \cite{2}. The details of the proof are omitted.
\end{proof}

Notably, from Proposition \ref{proposition:2.1}, $\mathcal{P}(B) \neq \emptyset$ if and only if
$
\Gamma_{B} (m,n)> 0
$
for some $m,n\geq 1$.

Recall some notation and terms from matrix theory. A matrix $\mathbf{A}$ is called nilpotent if $\mathbf{A}^k = 0$ for some $k \geq 1$.
The property "nilpotent" can be used to specify whether $B$ is a cycle generator or non-cycle generator.

\begin{pro}
\label{proposition:2.2}
Given a basic set $B \subset \Sigma_{2 \times 2}(3)$,
\begin{itemize}
    \item[(i)]  $B$ is a cycle generator if and only if $\mathbf{T}_m$ is not nilpotent for some $m \geq 1$.
    \item[(ii)] $\Sigma(B)=\emptyset$ if and only if $\mathbf{V}_m$ is nilpotent for some $m \geq 1$.
\end{itemize}
\end{pro}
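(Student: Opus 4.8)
The plan is to reduce both equivalences to a handful of elementary facts about nonnegative integer matrices, applied to the counting identities already established. Note first that both $\mathbf{T}_m$ and $\mathbf{V}_m$ are nonnegative integer matrices, being built from the $0$--$1$ matrix $\mathbf{V}_{2\times 2}(B)$ through the nonnegative recurrences above. For such a matrix $\mathbf{A}$, writing $|\mathbf{A}|$ for the sum of its entries, I would record three standard facts: (a) $|\mathbf{A}^{k}|=0$ forces $\mathbf{A}^{k}=0$, since no cancellation can occur among nonnegative entries; (b) $\mathbf{A}$ is nilpotent if and only if the directed graph with an edge $i\to j$ whenever $\mathbf{A}_{i,j}>0$ contains no directed cycle; and (c) $\mathbf{A}$ is nilpotent if and only if $tr(\mathbf{A}^{n})=0$ for every $n\geq 1$. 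Fact (c) is immediate in one direction, since a nilpotent matrix has every eigenvalue equal to $0$ and hence $tr(\mathbf{A}^{n})=\sum_i\lambda_i^{n}=0$; in the other direction a directed cycle of length $\ell$ supplied by (b) contributes a positive diagonal entry to $\mathbf{A}^{\ell}$, so $tr(\mathbf{A}^{\ell})>0$.

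For part (i), I would start from the remark following Proposition \ref{proposition:2.1}: $B$ is a cycle generator, i.e.\ $\mathcal{P}(B)\neq\emptyset$, precisely when $\Gamma_B(m,n)>0$ for some $m,n\geq 1$. By the trace formula (\ref{eqn:2.13}) this reads $tr(\mathbf{T}_m^{n})>0$ for some $m,n\geq 1$. Fixing $m$ and applying fact (c) to $\mathbf{A}=\mathbf{T}_m$ shows that $tr(\mathbf{T}_m^{n})>0$ for some $n$ if and only if $\mathbf{T}_m$ is not nilpotent (here $tr(\mathbf{T}_m^{n})\geq 0$ always, so ``positive for some $n$'' is exactly the negation of ``zero for all $n$''). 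Taking the disjunction over $m$ gives the claimed equivalence: $B$ is a cycle generator if and only if $\mathbf{T}_m$ fails to be nilpotent for some $m\geq 1$.

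For part (ii), the engine is the counting identity (\ref{eqn:2.10}), $|\Sigma_{(m+1)\times n}(B)|=|\mathbf{V}_{m+1}^{\,n-1}|$. For $(\Leftarrow)$, if $\mathbf{V}_m$ is nilpotent, say $\mathbf{V}_m^{k}=0$, then (\ref{eqn:2.10}) gives $|\Sigma_{m\times(k+1)}(B)|=0$, so $\Sigma_{m\times(k+1)}(B)=\emptyset$, and (\ref{eqn:2.1-0}) yields $\Sigma(B)=\emptyset$; if the nilpotent index is $m=1$ one first passes to $m=2$, using that a long strip of height $2$ contains one of height $1$, so nilpotency of $\mathbf{V}_1$ forces that of $\mathbf{V}_2$. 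For $(\Rightarrow)$, suppose $\Sigma(B)=\emptyset$. A compactness argument (K\"onig's lemma applied to the inverse system of admissible fillings of the finite boxes $\mathbb{Z}_{M\times N}$) shows that some finite box must already be untileable: there exist $M,N\geq 2$ with $\Sigma_{M\times N}(B)=\emptyset$. Then (\ref{eqn:2.10}) gives $|\mathbf{V}_M^{\,N-1}|=0$, and by fact (a) $\mathbf{V}_M^{\,N-1}=0$, so $\mathbf{V}_M$ is nilpotent.

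The step I expect to be the main obstacle is the direction $(\Rightarrow)$ of part (ii): converting the global statement $\Sigma(B)=\emptyset$ into the vanishing of a single matrix power at one fixed height $M$. This is exactly where compactness must enter, since $\Sigma(B)=\emptyset$ by itself rules out only the whole plane, not any particular finite patch. Once an untileable finite box is secured, nonnegativity (fact (a)) makes the passage to nilpotency immediate, and the only remaining care is the bookkeeping of the strip height $m$ — monotonicity of nilpotency in $m$ — which disposes of the boundary case $m=1$ and reconciles the orders $m$ occurring in the two parts.
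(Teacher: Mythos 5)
Your proposal is correct and takes essentially the same route as the paper: part (i) comes from the trace identity (2.13) together with the standard fact that a nonnegative matrix is nilpotent iff the traces of all its powers vanish, and part (ii) comes from the counting identity (2.10) together with (2.1) and a compactness argument. The paper's own proof consists only of ``follows immediately'' and ``similarly,'' so your write-up just supplies the details it leaves implicit --- in particular the K\"onig-lemma step needed to pass from $\Sigma(B)=\emptyset$ to the emptiness of some finite box, which is indeed the only genuinely nontrivial ingredient.
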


\begin{proof}
From (\ref{eqn:2.13}) of Propositon \ref{proposition:2.1}, $B$ is easily seen to be a cycle generator if and only if $tr(\mathbf{T}_m^n)>0$ for some $m,n\geq1$. Therefore, (i) follows immediately.

Similarly, from (\ref{eqn:2.10}), (ii) follows.

\end{proof}

The following proposition provides an efficient method to check the nilpotent for non-negative matrix and can be easily proven. The proof is omitted.

\begin{pro}
\label{proposition:2.3}
Suppose $A$ is a non-negative matrix. Then, $\mathbf{A}$ is nilpotent if and only if $\mathbf{A}$ can be reduced to a zero matrix by repeating the following process:
if the $i$-th row (column) of $\mathbf{A}$ is a zero row, then the $i$-th colume (row) of $\mathbf{A}$ is replaced with a zero column.

\end{pro}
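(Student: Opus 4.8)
The plan is to translate the statement into the language of directed graphs, where both nilpotency and the reduction process have transparent meanings. To a non-negative $N \times N$ matrix $\mathbf{A} = [a_{ij}]$ I associate the directed graph $G$ on vertices $\{1, \dots, N\}$ having an arc $i \to j$ precisely when $a_{ij} \neq 0$ (equivalently $a_{ij} > 0$). The first fact I would record is that, because $\mathbf{A}$ is non-negative, no cancellation occurs in matrix products: the entry $(\mathbf{A}^k)_{ij}$ is positive if and only if $G$ contains a directed walk of length $k$ from $i$ to $j$. Consequently $\mathbf{A}$ is nilpotent if and only if $G$ admits no arbitrarily long walks, which holds if and only if $G$ is acyclic: a directed cycle (including a loop $i \to i$) would yield walks of every length, while in an acyclic graph on $N$ vertices every walk has length at most $N-1$, so that $\mathbf{A}^N = 0$.

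Next I would interpret one pass of the reduction process graph-theoretically. A zero $i$-th row means vertex $i$ has no out-arc, i.e. $i$ is a sink; the rule then zeros the $i$-th column, deleting all in-arcs to $i$, which isolates $i$ and is equivalent to removing it from $G$. Symmetrically, a zero $i$-th column marks $i$ as a source, and the corresponding rule removes it. Thus the entire process is exactly the repeated deletion of sinks and sources, and the assertion ``$\mathbf{A}$ reduces to the zero matrix'' is equivalent to ``every vertex of $G$ can eventually be deleted in this manner.''

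For the direction assuming nilpotency, and hence that $G$ is acyclic, I would use that a finite acyclic digraph always possesses a sink: taking a maximal-length directed walk, its terminal vertex can have no out-arc, for otherwise the walk could be extended or a directed cycle would be created. Zeroing the corresponding column removes that vertex, leaving the matrix of the induced subgraph, which is again acyclic. An induction on $N$ then drives the process to the zero matrix. For the converse I would argue by contrapositive: if $\mathbf{A}$ is not nilpotent, fix a directed cycle $v_1 \to v_2 \to \cdots \to v_k \to v_1$ in $G$. The key claim, proved by induction on the number of reduction steps, is that no row or column indexed by a cycle vertex is ever zeroed. The cycle arc recorded by $a_{v_j v_{j+1}}$ is erased only when the row of $v_j$ or the column of $v_{j+1}$ is zeroed, and both indices are cycle vertices; so as long as no cycle vertex has been touched, all cycle arcs survive, whence each $v_j$ retains an outgoing arc (nonzero row) and an incoming arc (nonzero column), and the triggering conditions never apply to it. Therefore the cycle arcs persist indefinitely and $\mathbf{A}$ can never be reduced to the zero matrix.

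The only delicate point is this persistence argument: one must verify that the two rules can erase the entry $a_{v_j v_{j+1}}$ only through the row of $v_j$ or the column of $v_{j+1}$, both of which are protected by the induction hypothesis, so that the invariant ``all cycle arcs are present'' is preserved at every step. Everything else is the standard equivalence between nilpotency of a non-negative matrix and acyclicity of its associated digraph, together with the existence of sinks in finite acyclic digraphs, so I expect the write-up to be short.
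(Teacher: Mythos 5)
Your proof is correct, and in fact it supplies an argument the paper itself does not give: the paper dismisses Proposition \ref{proposition:2.3} as "easily proven" and omits the proof entirely. Your translation to digraphs is the natural route: the equivalence of nilpotency with acyclicity of the associated digraph, the identification of the reduction rule with deletion of sinks and sources, the existence of a sink in a finite acyclic digraph for the forward direction, and the invariant that no arc of a fixed directed cycle can ever be erased (since the entry $a_{v_j v_{j+1}}$ lies in a row and a column both indexed by cycle vertices, whose rows and columns stay nonzero) for the converse. Both directions are handled for the non-deterministic process — existence of a reducing sequence when $\mathbf{A}$ is nilpotent, and failure of every sequence when it is not — so the write-up is complete as it stands.
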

\section{ Main Result }

\subsection{Periodic Pairs}
\setcounter{equation}{0}
This section firstly classifies all local patterns in $\{ 0, 1, 2 \}^{\mathbb{Z}_{2\times 2}}$ into three groups.

First, the local pattern  $\alpha =
\begin{array}{c}
\psfrag{a}{\tiny{$\alpha_0$}}
\psfrag{b}{\tiny{$\alpha_2$}}
\psfrag{c}{\tiny{$\alpha_{1}$}}
\psfrag{d}{\tiny{$\alpha_{3}$}}
\includegraphics[scale=0.31]{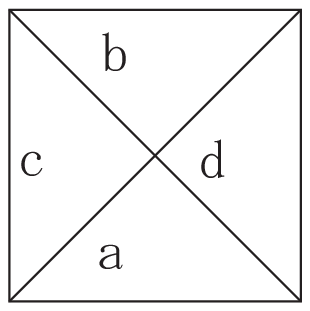}
\end{array}
=  \alpha_0, \alpha_1, \alpha_2, \alpha_3 ) $ is assigned a number by

\begin{equation} \label{eqn:3.1}
\varphi( ( \alpha_0, \alpha_1, \alpha_2, \alpha_3 ) ) = 1 + \sum_{j=0}^3 \alpha_j 3^j.
\end{equation}
Then, all 81 local patterns are listed in the following three groups $G_0$, $G_1$ and $G_2$.
\begin{eqnarray*}
G_0 &=& \{ 1, 11, 21, 31, 41, 51, 61, 71, 81 \} \\
G_1 &=& \{ 2, 3, 4, 7, 10, 12, 14, 17, 19, 20, 24, 27, 28, 32, 33, 34, 38, 40, \\
       & &42, 44, 48, 49, 50, 54, 55, 58, 62, 63, 65, 68, 70, 72, 75, 78, 79, 80 \} \\
G_2 &=& \{ 5, 6, 8, 9, 13, 15, 16, 18, 22, 23, 25, 26, 29, 30, 35, 36, 37, 39, \\
       & &43, 45, 46, 47, 52, 53, 56, 57, 59, 60, 64, 66, 67, 69, 73, 74, 76, 77 \}
\end{eqnarray*}

\begin{equation*}
\begin{array}{c}
G_{0} =
\begin{array}{c}
\includegraphics[scale=0.4]{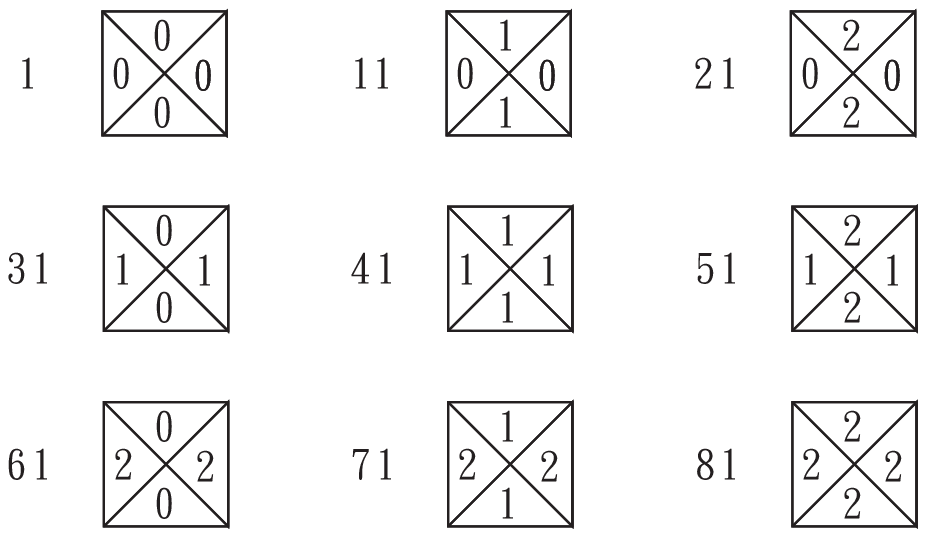}
\end{array} \\ \\ \\
G_{1} =
\begin{array}{c}
\includegraphics[scale=0.4]{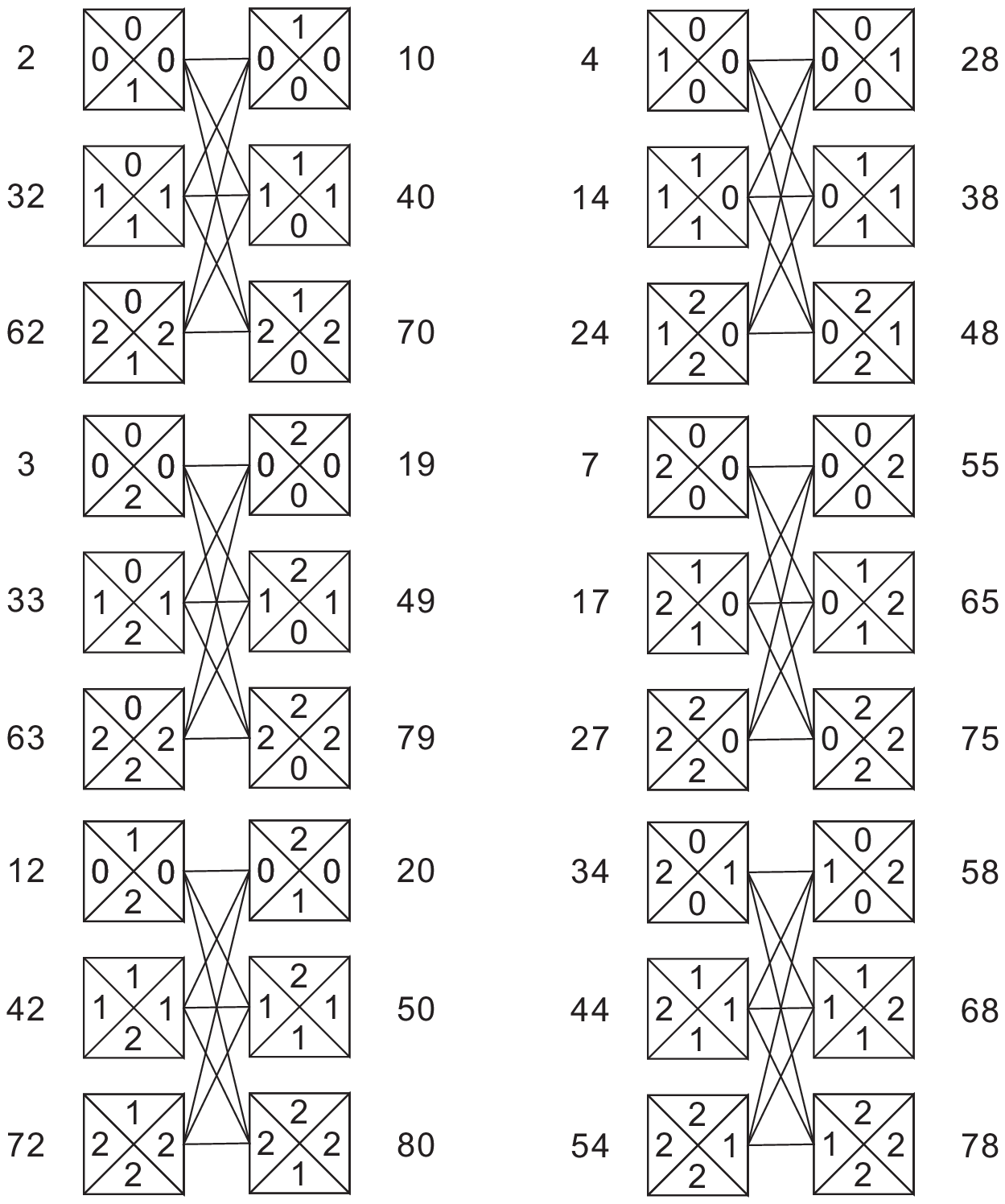}
\end{array}
\\ \\ \\
G_{2} =
\begin{array}{c}
\includegraphics[scale=0.4]{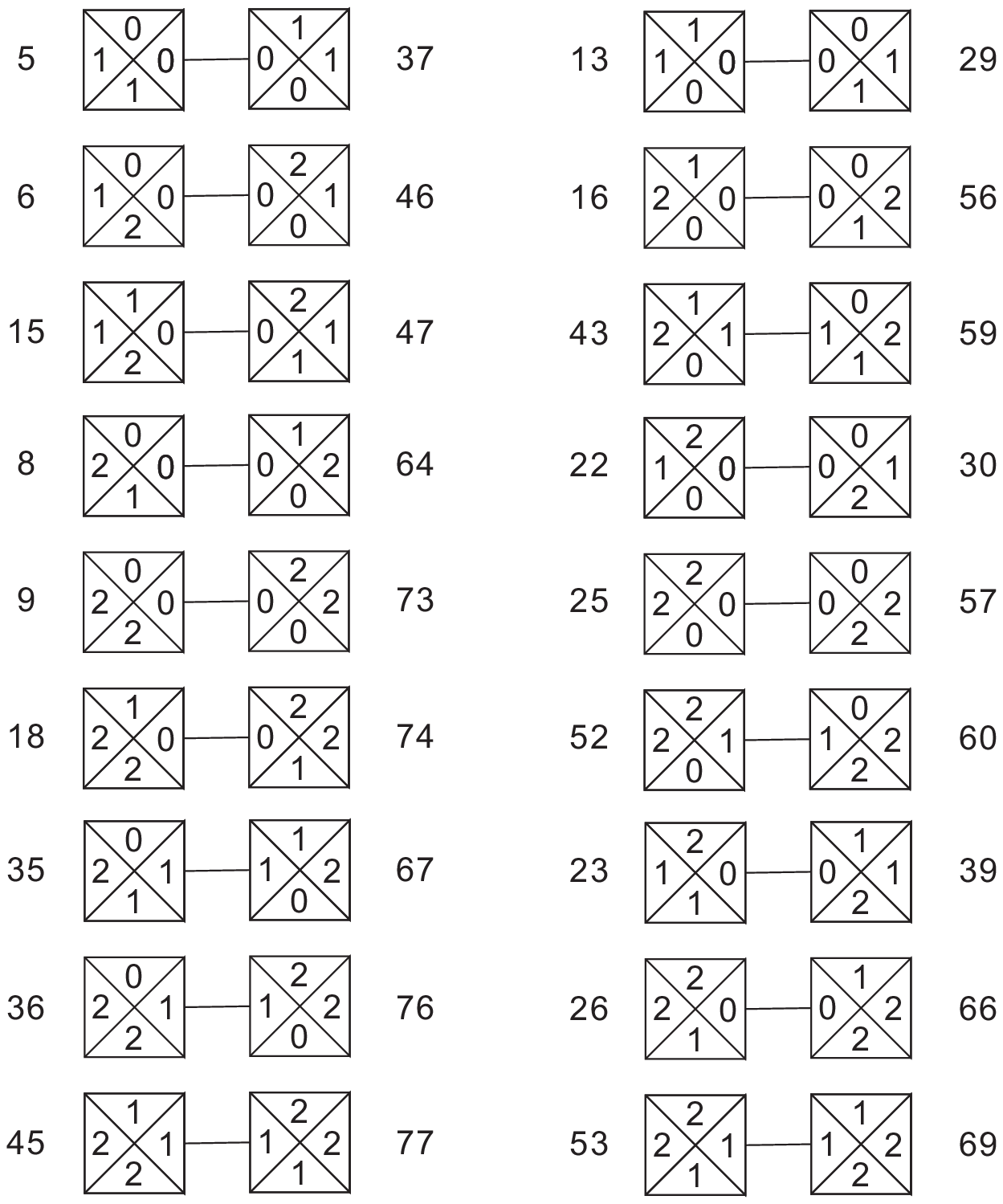}
\end{array}
\end{array}
\end{equation*}
\begin{equation*}
\text{Figure 3.1}
\end{equation*}
Clearly, every tile in $G_0$ can generate a $(1,1)$-periodic pattern. Furthermore, elements in $G_i$, $i=1,2$, form periodic pairs as in Fig. 3.1: two tiles that are connected by a line can generate a $(2,2)$-periodic pattern.
More precisely, the diagrams in Fig. 3.1 can be interpreted as follows.

\begin{pro}
\label{proposition:3.1}
\begin{itemize}
\item[(i)] Each tile $e$ in $G_0$ can generate a periodic pattern by repetition of itself: $\{ e \}$ is then a minimum
            cycle generator and is $(1,1)$-periodic.
\item[(ii)] For each tile $e$ in $G_1$, there exist exactly three tiles $e_1,e_2,e_3\in G_{1}$ such that $\{ e, e_i \}$ can form a periodic cycle, which is a $(2,2)$-periodic pattern, and $\{ e, e_i \}$ is a minimum cycle generator for $1 \leq i \leq 3$.
\item[(iii)]  For each tile $e$ in $G_2$, there exists exactly one tile $e^{\prime}\in G_{2}$ such that $\{ e, e^{\prime} \}$ is a minimum
            cycle generator and is $(2,2)$-periodic.
\end{itemize}
\end{pro}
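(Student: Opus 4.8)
The plan is to translate the entire statement into a question about the four edge-colors of a tile, and then to reduce the existence of a periodic pattern to a flow (closed-walk) condition on two small transition graphs. Writing a tile as $(\alpha_0,\alpha_1,\alpha_2,\alpha_3)$ with $\alpha_0,\alpha_2$ the south/north colors and $\alpha_1,\alpha_3$ the east/west colors (as dictated by $\varphi$ and Figure 3.1), I would first verify that $G_0,G_1,G_2$ are exactly the tiles for which, respectively, both, exactly one, and neither of the equalities $\alpha_0=\alpha_2$ and $\alpha_1=\alpha_3$ hold; a direct count gives $9$, $36$, $36$, matching the three lists. With this reading, a single tile $e$ tiles the plane by repetition precisely when $\alpha_0=\alpha_2$ and $\alpha_1=\alpha_3$, i.e.\ when $e\in G_0$; this already proves (i), and it also shows that $\{e\}$ is a cycle generator iff $e\in G_0$. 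Consequently a two-tile set $\{e,e'\}$ is a minimal cycle generator iff $\{e,e'\}$ is a cycle generator and $e,e'\notin G_0$, which is the reformulation I will use throughout.

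The main tool I would set up is a flow-conservation principle. To each tile assign a vertical edge $\alpha_0\to\alpha_2$ and a horizontal edge $\alpha_3\to\alpha_1$ in the complete digraph on the color set $\{0,1,2\}$. In any periodic pattern every column is a closed walk in the vertical graph and every row is a closed walk in the horizontal graph, so for each column the signed sum $\sum_t(\mathbf{1}_{\alpha_0(t)}-\mathbf{1}_{\alpha_2(t)})$ over the tiles $t$ in that column vanishes, and similarly for rows. This lets me control which partners $e'$ are possible for a given $e$ for \emph{every} period at once, rather than only for period $2$. Using the rotation $\rho\in D_4$ to interchange the horizontal and vertical directions, I then normalize $e$ into a convenient orientation within its group.

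For (ii), take $e\in G_1$ normalized so that $\alpha_0=\alpha_2=a$ but $\alpha_1=b\neq c=\alpha_3$. The vertical edge of $e$ is a loop, so flow conservation in any column containing $e'$ forces the vertical edge of $e'$ also to be a loop, i.e.\ $e'$ has matching south/north colors; the horizontal edge of $e$ is the non-loop $c\to b$, and since it must be cancelled within each row by the only other available edge, conservation forces the horizontal edge of $e'$ to be the reverse $b\to c$. Hence $e'=(a',c,a',b)$ with $a'\in\{0,1,2\}$ free, giving exactly three tiles, each lying in $G_1$ and distinct from $e$ (their east color is $c\neq b$). Conversely each such pair does tile: alternating vertical stripes of $e$ and $e'$ (or, when $a'=a$, even a checkerboard) produce a $(2,2)$-periodic pattern, and the pair is minimal because neither tile is in $G_0$. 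For (iii), take $e\in G_2$, so both of its edges are non-loops; the same cancellation argument applied to columns and to rows forces $e'$ to be simultaneously the vertical reverse and the horizontal reverse of $e$, that is $e'=(\alpha_2,\alpha_3,\alpha_0,\alpha_1)=\rho^2 e$, which is unique, lies in $G_2$, and differs from $e$ since $\alpha_0\neq\alpha_2$. The explicit $(2,2)$ checkerboard built from $e$ and $\rho^2 e$ shows that this pair is indeed a minimal cycle generator.

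The step I expect to be the main obstacle is the necessity direction: showing that no \emph{other} partner $e'$ can work, for any period whatsoever, so that the counts $3$ and $1$ are exact rather than merely lower bounds. The flow-conservation argument is what makes this manageable, because it reduces the question to asking which integer combinations $p\,\delta_e+q\,\delta_{e'}=0$ are possible for the edge-vectors $\delta$; once one observes that the only edge-vector $\mathbf{1}_u-\mathbf{1}_w$ that is a negative multiple of a non-loop vector $\mathbf{1}_x-\mathbf{1}_y$ is $\mathbf{1}_y-\mathbf{1}_x$ itself, the partner is pinned down. Care is needed to confirm that $e'$ must actually appear in a column (row) through $e$, rather than merely somewhere in the pattern; this follows because $e$ alone cannot close a column (row) when its corresponding edge is a non-loop. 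I would also invoke Proposition \ref{proposition:2.2} to confirm that ``cycle generator'' genuinely means the existence of some periodic pattern, so that the closed-walk argument applies uniformly across all periods.
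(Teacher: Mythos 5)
Your proof is correct, and it takes a genuinely different route from the paper's treatment. The paper does not really argue Proposition 3.1 at all: the proposition is presented as the ``interpretation'' of Figure 3.1, i.e.\ the 81 tiles are explicitly sorted into $G_0$, $G_1$, $G_2$ and the $(2,2)$-periodic partners are exhibited by the connecting lines in the figure, so the justification is finite inspection of concretely listed pairs. You instead prove the statement structurally: after identifying $G_0$, $G_1$, $G_2$ with the tiles satisfying both, exactly one, or neither of the opposite-edge equalities $\alpha_0=\alpha_2$, $\alpha_1=\alpha_3$ (a characterization that does match the paper's lists under $\varphi$, and is worth the one-time verification you propose), you view each tile as an edge in a vertical and a horizontal digraph on the color set, observe that rows and columns of any periodic pattern are closed walks, and use flow conservation $p\,\delta_e+q\,\delta_{e'}=0$ together with the fact that the only vector of the form $\mathbf{1}_u-\mathbf{1}_w$ that is a negative multiple of a non-loop vector $\mathbf{1}_x-\mathbf{1}_y$ is $\mathbf{1}_y-\mathbf{1}_x$. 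What your route buys is precisely the part a figure cannot display: the words ``exactly three'' and ``exactly one'' hold against all candidate partners and all periods $(m,n)$, not merely period two, because the cancellation lemma excludes every other partner regardless of period; the figure only exhibits the positive half, that the displayed pairs do tile. You also correctly dispatch the two pitfalls in the necessity direction: a minimal two-tile generator must use both tiles in any of its periodic patterns (since no tile outside $G_0$ tiles alone), and a tile whose relevant edge is a non-loop cannot close a row or column by itself, so the partner genuinely occurs in a common row (respectively, some column) and the conservation constraint applies. What the paper's approach buys is brevity---for 81 tiles the inspection is finite and the authors clearly regarded it as routine, consistent with the computational flavor of the whole paper---while yours explains conceptually why the pairing has the form it does (the partner reverses each non-loop edge, hence is $\rho^2 e$ when $e\in G_2$) and generalizes to $p$ colors without re-enumeration, giving $p$ partners in the one-loop case and a unique partner in the no-loop case.
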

These minimum cycle generators are the simplest.

 \begin{rmk}
 \label{remark:3.2}
From Fig. 3.1, the set $G_1 \cup G_2$ with 72 tiles can be decomposed into 36 disjoint sets that each consists of two tiles such that each set is a minimal cycle generator. Therefore, the number of the tiles of maximal non-cycle generators in $\mathcal{N}(3)$ is equal to or less than 36. Indeed,  elements in each pairs can be carefully picked up, and the maximum non-cycle generator with 36 elements thus obtained; see Table A.3. Moreover, from Proposition \ref{proposition:2.2}, for each element $B$ in these eight equivalence classes, $\Sigma(B)=\emptyset$ can be verified.
 \end{rmk}

%
%\begin{ex}
%There are 8 different classes of maximum non-cycle generators with 36 elements. For example:
%\begin{equation*}
%\begin{array}{l}
%\{  2,  3,  4,  5,  6,  7,  8,  9, 12, 13, 14, 15, 16, 17, 18, 22, 23, 24,25, \\
% \hspace{0.2cm}   26, 27,32, 33, 34,35, 36, 42,43, 44, 45, 52, 53, 54, 62, 63, 72
%\}.
%\end{array}
%\end{equation*}
%The other $7$ classes can also be obtained and are omitted here.
%
%\end{ex}
%
%cLEARLY, IF $B\SUBSETEQ\sIGMA_{2\TIMES 2}(3)$ CONTAINS $E\IN g_0$, THEN $B$ IS A CYCLE GENERATOR. tHEN, IN STUDYING THE wANG'S CONJECTURE, ONLY THE CASES $B\SUBSETEQ g_1 \CUP g_2$ NEED TO BE CONSIDERED.
%
%
%
%
%iN THE FOLLOWING PROPOSITION, THE EUQVILENCE CLASS (2,2) IS USED TO REDUCE THE CASES $B\SUBSETEQ g_1 \CUP g_2$.
%
%
%\BEGIN{PRO}
%
%\END{PRO}
%

\subsection{Algorithms }

Before the developed algorithms are presented , some notation must be introduced.
\begin{defi}
\begin{itemize}
\item[(i)] For a set $A$, let $\mathbb{P}(A)$ be the power set of $A$.

\item[(ii)] For $\mathbb{B}\subseteq \mathbb{P}(\Sigma_{2\times 2}(3))$, let

\begin{equation*}
[\mathbb{B}]=\left\{ [B] \hspace{0.1cm}\mid \hspace{0.1cm} B\in \mathbb{B} \right\}.
\end{equation*}

\item[(iii)] For $[B]\in [\mathbb{P}(\Sigma_{2\times2}(3))]$, let $\langle B\rangle$ be a fixed chosen element of $[B]$.

\item[(iv)] Let $\mathcal{N}^{*}(3)$ be the set of all maximal non-cycle generators that can not generate a global pattern. Indeed,
\begin{equation*}
\mathcal{N}^{*}(3)=\left\{ N\in \mathcal{N}(3) \hspace{0.1cm} \mid \hspace{0.1cm} \Sigma(N)=\emptyset\right\}.
\end{equation*}

\item[(v)] For $B\subseteq\Sigma_{2\times 2}(3)$, let $\mathcal{C}(B)$ be the set of all minimal cycle generators that are contained in $B$.

\item[(vi)] For $B\subseteq\Sigma_{2\times 2}(3)$, let $\mathcal{N}^{*}(B)$ be the set of all maximal non-cycle generators that can not generate a global pattern and are contained in $B$.

\end{itemize}
\end{defi}

Now, the main idea of the algorithms is introduced, as follows.

\begin{equation*}
\begin{array}{l}
\text{Let } \\
\hspace{1.0cm }N=2^{81},             \\  \\
\hspace{1.0cm }\mathbb{P}(\Sigma_{2\times 2}(3))= \{ B_{j} \hspace{0.1cm}\mid \hspace{0.1cm} 0\leq j\leq N-1 \},\text{ where } B_0= \emptyset, \\ \\
\hspace{1.0cm }\text{Initial state for }\mathcal{C}(3)\text{ : }\mathcal{C}_{I}(0)=\{\emptyset\},\\ \\
\hspace{1.0cm }\text{Initial state for }\mathcal{N}^{*}(3)\text{ : } \mathcal{N}^{*}_{I}(0)=\{\emptyset\}, \\ \\
\hspace{1.0cm }\text{Initial state for the set of aperiodic sets}\text{ : }\mathcal{U}_{I}(0)=\{\emptyset\}. \\
\end{array}
\end{equation*}

\begin{equation*}
\begin{tabular}{lllllllllllllllllllll}
\hline
\textbf{Main Algorithm} & & & & & &  & &  & & & & &&&&& &\\
\hline
$j = 0$ \\
\textbf{repeat} \\
 \hspace{0.3cm}$j =  j+1 $ \\
 \hspace{0.3cm}\textbf{if} $\mathcal{P}(B_{j})\neq \emptyset$,\\
\hspace{0.7cm} $\mathcal{C}_{I}(j)=\mathcal{C}_{I}(j-1)\cup \{B_{j}\}$ \\
 \hspace{0.3cm}\textbf{else}\\
 \hspace{0.7cm} \textbf{if} $\Sigma(B_{j})= \emptyset$, \\
\hspace{1.1cm} $\mathcal{N}^{*}_{I}(j)=\mathcal{N}^{*}_{I}(j-1)\cup \{B_{j}\}$ \\
 \hspace{0.7cm}\textbf{else}\\
 \hspace{1.1cm} $\mathcal{U}_{I}(j)=\mathcal{U}_{I}(j-1)\cup \{B_{j}\}$ \\
  \hspace{0.7cm}\textbf{end}\\
  \hspace{0.3cm}\textbf{end}\\
\textbf{until} $j=N-1$ \\
\hline
\end{tabular}
\end{equation*}

%
%
%\begin{algorithm}
%\caption{(Main)}
%\begin{algorithmic}
%\STATE $j = 0$
%\REPEAT \STATE $j =  j+1 $
%       \STATE If $\mathcal{P}(B_{j})\neq \emptyset$, $\mathcal{C}_{I}(j)=\mathcal{C}_{I}(j-1)\cup B_{j}$
%        \STATE  If $\Sigma(B)_{j}= \emptyset$, $\mathcal{N}_{I}(j)=\mathcal{N}_{I}(j-1)\cup B_{j}$
%         \STATE  Otherwise, $\mathcal{U}_{I}(j)=\mathcal{U}_{I}(j-1)\cup B_{j}$
%\UNTIL $j=N-1$
%\end{algorithmic}
%\end{algorithm}
After the algorithm has been executed, if $\mathcal{U}_{I}(N-1)=\{\emptyset\}$, then Wang's conjecture holds for $p=3$.
The methods to achieve the goal are introduced below.

\begin{itemize}
\item[(I)] reduce the number of cases that must be considered in the computation,

\item[(II)] construct efficient initial states for $\mathcal{C}(3)$ and $\mathcal{N}^{*}(3)$,

\item[(III)]  construct an efficient process for determining whether or not $\mathcal{P}(B_{j})=\emptyset$ and $\Sigma(B_{j})= \emptyset$.
\end{itemize}

With respect to (I), the decomposition $\Sigma_{2\times2}(3)=G_0\cup G_1 \cup G_2$ is used to reduce the number of cases that must be considered in the computation. Clearly, if $B\subseteq\Sigma_{2\times 2}(3)$ contains a tile $e\in G_0$, then $B$ is a cycle generator. Now, in studying Wang's conjecture, only cases $B\subseteq G_1 \cup G_2$ have to be considered.

Given $B=A_1 \cup A_2$ with $A_1 \in \mathbb{P}(G_1)$ and $A_2 \in \mathbb{P}(G_2)$, if $A_1$ or $A_2$ is a cycle generator, then $B$ immediately satisfies (\ref{eqn:1.2}). By (\ref{eqn:2.2}), the cases $B\subseteq G_1 \cup G_2$ that have to be considered can be further reduced to the cases in $\mathcal{I}$ or $\mathcal{I}'$:
\begin{equation} \label{eqn:3.2}
\begin{array}{rl}
\mathcal{I}\equiv & \left\{A_1 \cup \langle A_2\rangle \hspace{0.1cm} \mid \hspace{0.1cm} A_1 \in\mathcal{D}_{1} \text{ and  }[A_2]\in [\mathcal{D}_{2}] \right\} \\
& \\
=& \{B_{j} \hspace{0.1cm} \mid \hspace{0.1cm} 1\leq j \leq |\mathcal{I}| \}
\end{array}
 \end{equation}
and
 \begin{equation} \label{eqn:3.3}
 \mathcal{I}' \equiv\left\{\langle A_1\rangle \cup A_2 \hspace{0.1cm} \mid \hspace{0.1cm} [A_1] \in [\mathcal{D}_{1}] \text{ and }A_2\in \mathcal{D}_{2} \right\},
  \end{equation}
where

\begin{equation}  \label{eqn:3.3-1}
\mathcal{D}_{j}=\left\{ A\in \mathbb{P}(G_{j}) \hspace{0.1cm} \mid \hspace{0.1cm} A \nsupseteq C \text{ for any }C\in \mathcal{C}(G_{j}) \right\}
\end{equation}
for $j=1,2$. For brevity, the proof is omitted. From Table 3.1, $N'\equiv|\mathcal{I}|\approx 1.35075\times 10^{12}$ and $|\mathcal{I'}|\approx 1.38458\times 10^{12}$.
Therefore, $\mathcal{I}$ is the better choice for reducing $B\subseteq G_1 \cup G_2$. Notably, $N'\ll |\mathbb{P}(G_1 \cup G_2 )|=2^{72}\approx 4.72237 \times 10^{21}$; the reduction is considerable. Table A.1 presents the details.

With respect to (II), let $\mathcal{U}_{I}(0)=\{\emptyset\}$. The initial data for $\mathcal{C}(3)$ are given by the set $\mathcal{C}_{I}(0)$ of all minimal cycle generators that are the subsets of $G_0$, $G_1$, or $G_2$. Indeed,
\begin{equation} \label{eqn:3.4}
\mathcal{C}_{I}(0)=\underset{j=0}{\overset{2}{\cup}}\mathcal{C}(G_j).
\end{equation}
On the other hand, the initial data for $\mathcal{N}^{*}(3)$ are given by
\begin{equation} \label{eqn:3.5}
\mathcal{N}_{I}^{*}(0)= \left\{N\in\mathcal{N}^{*}(G_1 \cup G_2):|N|=36\right\}.
\end{equation}
From Remark 3.2, $\mathcal{N}_{I}^{*}(0)$ equals the set of all maximal non-cycle generators in $G_1 \cup G_2$ with 36 tiles. $\mathcal{C}_{I}(0)$ and $\mathcal{N}^{*}_{I}(0)$ can be easily found using a computer program. See Table A.2 and A.3.

With respect to (III), the flowchart, which is based on (I) and (II), is as follows.
\begin{equation*}
%\begin{figure*}[h] \label{fig:3.1}
\psfrag{a}{\tiny{Initial data: $\mathcal{C}_{I}(j-1)$, $\mathcal{N}_{I}^{*}(j-1)$ and $\mathcal{U}_{I}(j-1)$}}
\psfrag{b}{\tiny{Cosider each $B_{j}\in\mathcal{I}$, $1\leq j\leq N'$}}
\psfrag{c}{\tiny{$G_1$, $G2$ and $G3$ }}
\psfrag{d}{\tiny{
Find $[\mathbb{P}(G_2)]^{-1}$ of $\mathbb{P}(G_2)$
}}
\psfrag{e}{\tiny{}}
\psfrag{f}{\tiny{Find $\mathcal{C}=\underset{j=0}{\overset{2}{\cup}}\mathcal{C}(G_j)$}}
\psfrag{g}{\tiny{Find $\mathcal{N}=\left\{N\in\mathcal{N}(G_1 \cup G_2):\right. $}}
\psfrag{h}{\tiny{$\left.|N|=36\right\} $}}
\psfrag{j}{\tiny{Consider each $B=A_1\cup A_2$, where}}
\psfrag{k}{\tiny{$A_1\in\mathbb{P}(G_1)$ and $A_2\in[\mathbb{P}(G_2)]^{-1}$}}
\psfrag{l}{\tiny{Check whether or not $B_{j}$ contains}}
\psfrag{m}{\tiny{an element $C\in\mathcal{C}_{I}(j-1)$}}
\psfrag{n}{\tiny{$\mathcal{P}(B_{j})\neq\emptyset$. $\left\{\begin{array}{l}\mathcal{C}_{I}(j)=\mathcal{C}_{I}(j-1) \\
\mathcal{N}^{*}_{I}(j)=\mathcal{N}^{*}_{I}(j-1) \\
\mathcal{U}_{I}(j)=\mathcal{U}_{I}(j-1)
\end{array}
\right.$}}
\psfrag{p}{\tiny{Check whether or not $B_{j}$ is a subset}}
\psfrag{q}{\tiny{of some $N\in\mathcal{N}_{I}^{*}(j-1)$}}
\psfrag{r}{\tiny{$\Sigma(B)=\emptyset$. $\left\{\begin{array}{l}\mathcal{C}_{I}(j)=\mathcal{C}_{I}(j-1) \\
\mathcal{N}^{*}_{I}(j)=\mathcal{N}^{*}_{I}(j-1) \\
\mathcal{U}_{I}(j)=\mathcal{U}_{I}(j-1)
\end{array}
\right.$}}
\psfrag{t}{\tiny{Check whether or not $\mathbf{V}_{k}(B_{j})$ is}}
\psfrag{u}{\tiny{ nilpotent for some $k\geq 1$}}
\psfrag{v}{\tiny{$\Sigma(B_{j})=\emptyset$. $\left\{\begin{array}{l}\mathcal{C}_{I}(j)=\mathcal{C}_{I}(j-1) \\
\mathcal{N}_{I}^{*}(j)=\mathcal{N}_{I}^{*}(j-1)\cup [B_{j}] \\
\mathcal{U}_{I}(j)=\mathcal{U}_{I}(j-1)
\end{array}
\right.$ }}
\psfrag{x}{\tiny{Check whether or not $\mathbf{T}_{k}(B_{j})$ is}}
\psfrag{y}{\tiny{ nilpotent for all $k\geq 1$}}
\psfrag{w}{\tiny{$\mathcal{P}(B_{j})\neq\emptyset$. $\left\{\begin{array}{l}\mathcal{C}_{I}(j)=\mathcal{C}_{I}(j-1)\cup [B_{j}] \\
\mathcal{N}_{I}^{*}(j)=\mathcal{N}_{I}^{*}(j-1) \\
\mathcal{U}_{I}(j)=\mathcal{U}_{I}(j-1)
\end{array}
\right.$ }}
\psfrag{z}{\tiny{$\Sigma(B_j)\neq\emptyset$ and $\mathcal{P}(B_j)=\emptyset$. }}
\psfrag{d}{\tiny{$\left\{\begin{array}{l}\mathcal{C}_{I}(j)=\mathcal{C}_{I}(j-1) \\
\mathcal{N}^{*}_{I}(j)=\mathcal{N}^{*}_{I}(j-1) \\
\mathcal{U}_{I}(j)=\mathcal{U}_{I}(j-1)\cup [B_{j}]
\end{array}
\right.$}}
\psfrag{0}{\tiny{Yes}}
\psfrag{1}{\tiny{No}}
\includegraphics[scale=0.6]{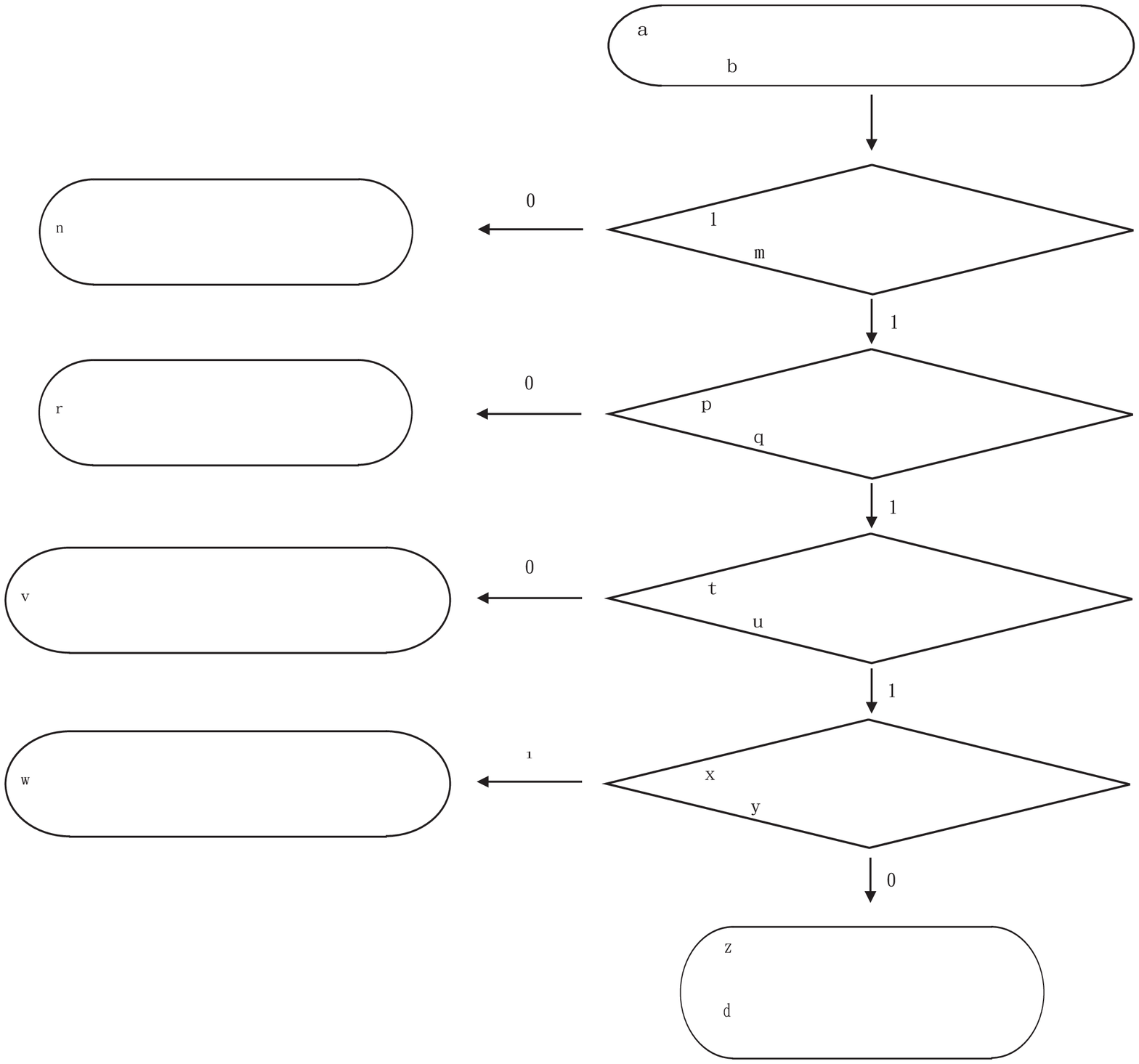}
%\caption*{Figure 3.1}
%\end{figure*}
\end{equation*}
\begin{equation*}
\text{Figure 3.2}
\end{equation*}

\begin{rmk}
\label{remark:3.4}
Suppose that the computation based on the flowchart has been completed. Let $\mathcal{C}=\mathcal{C}_{I}(N'-1)$, $\mathcal{N}^{*}=\mathcal{N}^{*}_{I}(N'-1)$ and $\mathcal{U}=\mathcal{U}_{I}(N'-1)$.
\begin{itemize}
\item[(i)] If the set $\mathcal{U}=\{\emptyset\}$, then Wang's conjecture holds for $p=3$; otherwise, every element in $\mathcal{U}$ is an aperiodic set.

\item[(ii)] It is easy to see that an element in $\mathcal{C}$ may be not a minimal cycle generator. However, $\mathcal{C}(3)$ can be obtained from $\tilde{\mathcal{C}}\equiv\underset{C\in\mathcal{C}}{\bigcup}[C]$ by the following process. If $C_1,C_2\in\tilde{\mathcal{C}}$ with $C_1\subsetneq C_2$, then $C_2$ must be removed from $\tilde{\mathcal{C}}$. Indeed,

\begin{equation*}
\mathcal{C}(3)=\left\{C\in \tilde{\mathcal{C}}\hspace{0.1cm} \mid \hspace{0.1cm} C\text{ does not contain any }C'\in \tilde{\mathcal{C}}\text{ except itself} \right\}.
\end{equation*}

\item[(iii)]  In a manner similar to that for (ii), let $\tilde{\mathcal{N}}\equiv\underset{N\in\mathcal{N}^{*}}{\bigcup}[N]$. Now,

 \begin{equation*}
\mathcal{N}^{*}(3)=\left\{N\in \tilde{\mathcal{N}}\hspace{0.1cm} \mid \hspace{0.1cm} N\text{ is not a proper subset of }N' \text{ for all }N'\in \tilde{\mathcal{N}}\text{ except itself}\right\}.
\end{equation*}
Moreover, if $\mathcal{U}=\{\emptyset\}$, $\mathcal{N}(3)=\mathcal{N}^{*}(3)$.
\end{itemize}
\end{rmk}%

\subsection{Main result }

The computer program of Fig. 3.2 is written, and the computation is completed in finite time. Indeed, the cases that consume the most time are those in which the numbers of tiles in $B\subseteq G_{1}\cup G_{2}$ are 18 and 19.  These cases can be computed completely within a week. The main result is as follows.

\begin{thm}
\label{theorem:3.5}
The set $\mathcal{U}$ is equal to $\{\emptyset\}$, and Wang's conjecture holds for $p=3$.
\end{thm}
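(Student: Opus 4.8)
The plan is to reduce Wang's conjecture for $p=3$ to the single assertion $\mathcal{U}=\{\emptyset\}$ and then to establish the latter through the exhaustive but finite classification encoded in the flowchart of Figure 3.2. First I would record the monotonicity of the pattern sets: if $B'\subseteq B$ then $\Sigma(B')\subseteq\Sigma(B)$ and $\mathcal{P}(B')\subseteq\mathcal{P}(B)$. Consequently, to verify (\ref{eqn:1.6}) it suffices to check that $\Sigma(N)=\emptyset$ for every maximal non-cycle generator $N\in\mathcal{N}(3)$, since any non-cycle generator $B$ is contained in some such $N$, whence $\Sigma(B)\subseteq\Sigma(N)=\emptyset$. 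In the language of the algorithm this is exactly the statement that the residual collection $\mathcal{U}$ of aperiodic sets is trivial, so that $\mathcal{N}(3)=\mathcal{N}^{*}(3)$ and, together with (\ref{eqn:1.5}), $\Sigma(B)\neq\emptyset$ if and only if $\mathcal{P}(B)\neq\emptyset$ for all $B\subseteq\Sigma_{2\times2}(3)$.

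Next I would set up the decision procedure for a single $B_{j}$. The theoretical backbone is Proposition \ref{proposition:2.2}: $B_{j}$ is a cycle generator iff $\mathbf{T}_{m}(B_{j})$ is not nilpotent for some $m$, and $\Sigma(B_{j})=\emptyset$ iff $\mathbf{V}_{m}(B_{j})$ is nilpotent for some $m$; nilpotency of these non-negative matrices is then tested by the zero-row/zero-column reduction of Proposition \ref{proposition:2.3}. I would run these tests in the order dictated by the flowchart, first using the already-accumulated data to prune: if $B_{j}$ contains a known minimal cycle generator from $\mathcal{C}_{I}(j-1)$ then $\mathcal{P}(B_{j})\neq\emptyset$ immediately, and if $B_{j}$ is a subset of a known $N\in\mathcal{N}^{*}_{I}(j-1)$ then $\Sigma(B_{j})=\emptyset$ immediately; only the cases surviving both pruning steps require direct computation of $\mathbf{T}_{m}$ and $\mathbf{V}_{m}$. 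The search space itself is cut to the index family $\mathcal{I}$ of (\ref{eqn:3.2}): any tile of $G_{0}$ forces a $(1,1)$-periodic pattern, so only $B\subseteq G_{1}\cup G_{2}$ matter; the symmetry group $D_{4}$ together with the independent horizontal and vertical color permutations $S_{3}\times S_{3}$ collapses each case to its class $[B]$; and (\ref{eqn:3.3-1}) discards any $B$ containing a pair from $\mathcal{C}(G_{1})$ or $\mathcal{C}(G_{2})$. The initial data $\mathcal{C}_{I}(0)$ of (\ref{eqn:3.4}) and $\mathcal{N}^{*}_{I}(0)$ of (\ref{eqn:3.5}), justified by Proposition \ref{proposition:3.1} and Remark \ref{remark:3.2}, seed the induction.

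The computation then sweeps through all $B_{j}\in\mathcal{I}$, at each step appending $[B_{j}]$ to exactly one of $\mathcal{C}_{I}$, $\mathcal{N}^{*}_{I}$, or $\mathcal{U}_{I}$. After it terminates I would read off $\mathcal{U}=\mathcal{U}_{I}(N'-1)$ and verify $\mathcal{U}=\{\emptyset\}$, and finally recover $\mathcal{C}(3)$ and $\mathcal{N}(3)=\mathcal{N}^{*}(3)$ by the minimality and maximality post-processing described in Remark \ref{remark:3.4}; the conclusion of the theorem then follows from the reduction of the first paragraph.

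The hard part will be guaranteeing that the two infinite conditions ``$\mathbf{V}_{m}$ nilpotent for some $m$'' and ``$\mathbf{T}_{m}$ nilpotent for all $m$'' are settled by a bounded, provably correct check: for fixed $m$ the $3^{m}\times 3^{m}$ matrix is decided in finitely many steps by Proposition \ref{proposition:2.3}, but one must supply an a priori bound on the order $m$, and hence on the matrix size, beyond which the classification can no longer change; otherwise a negative nilpotency answer could never be certified. I would derive such a bound from the bounded combinatorial structure of $G_{1}\cup G_{2}$ and the finitely many possible reduced row/column sets, as in \cite{11}, which shows that a cycle generator already displays a non-nilpotent $\mathbf{T}_{m}$ at small $m$ and that any failure of $\Sigma(B)=\emptyset$ is detected at bounded $m$ via (\ref{eqn:2.10}). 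The remaining difficulty is purely one of scale: $N'\approx 1.35\times 10^{12}$ cases, dominated by the strata with $18$ and $19$ tiles; here the pruning against $\mathcal{C}_{I}$ and $\mathcal{N}^{*}_{I}$ is essential, confining explicit matrix computations to the comparatively few genuinely undecided cases, so that the whole sweep completes in finite (indeed, practical) time and reports $\mathcal{U}=\{\emptyset\}$.
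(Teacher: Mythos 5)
Your overall architecture --- restricting to the index family $\mathcal{I}$ of (\ref{eqn:3.2}), pruning each case against the accumulated $\mathcal{C}_{I}$ and $\mathcal{N}^{*}_{I}$, and deciding the survivors by nilpotency tests on $\mathbf{T}_{m}$ and $\mathbf{V}_{m}$ via Propositions \ref{proposition:2.2} and \ref{proposition:2.3} --- is exactly the paper's proof, which is a computer-assisted sweep whose successful completion is the content of Theorem \ref{theorem:3.5}. But your final paragraph introduces a genuine flaw: you assert that correctness requires an a priori bound on the order $m$ beyond which the nilpotency status of $\mathbf{V}_{m}$ and $\mathbf{T}_{m}$ can no longer change, and you claim such a bound can be derived ``from the bounded combinatorial structure of $G_{1}\cup G_{2}$\dots as in \cite{11}.'' No such bound is derived in the paper, and Remark \ref{remark:3.6}(ii) states explicitly that an analytic proof of maximality of the observed orders is not available and that ``a prior estimate of the upper bound of $m$ does not exist.'' The values $m=35$ (for $\mathbf{T}$) and $m=13$ (for $\mathbf{V}$) are a posteriori outputs of the completed run, not inputs to it. If your proof genuinely needed that bound as a lemma, it would be stuck at a step that nobody knows how to carry out.

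The point you are missing is that no such bound is needed, because the algorithm never has to certify the negative conjunction ``$\mathbf{T}_{k}$ nilpotent for all $k$ and $\mathbf{V}_{k}$ non-nilpotent for all $k$.'' By Proposition \ref{proposition:2.2}, each of the two properties ``$\mathcal{P}(B_{j})\neq\emptyset$'' and ``$\Sigma(B_{j})=\emptyset$'' is semi-decidable: each admits a finite certificate (a non-nilpotent $\mathbf{T}_{m}(B_{j})$ for some $m$, respectively a nilpotent $\mathbf{V}_{m}(B_{j})$ for some $m$, each checkable in finitely many steps by Proposition \ref{proposition:2.3} and, for $\mathbf{V}$, by (\ref{eqn:2.10}) together with a compactness argument). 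The program dovetails these two searches over increasing $m$ for each $B_{j}\in\mathcal{I}$. A priori this procedure is guaranteed to halt on $B_{j}$ only if $B_{j}$ is not an aperiodic set; the substance of the theorem is the reported fact that it halted on every one of the $N'$ cases, each halt producing a finite certificate that places $B_{j}$ into $\mathcal{C}_{I}$ or $\mathcal{N}^{*}_{I}$. That observed termination is itself the proof that $\mathcal{U}=\{\emptyset\}$: had some $B_{j}$ been aperiodic, the computation would simply never have finished on it, and no bounded-$m$ check could have correctly placed it in $\mathcal{U}$. So demanding a provably correct a priori bound on $m$ is both unnecessary for the theorem and, according to the paper itself, unavailable; the rest of your proposal, with this paragraph replaced by the termination argument just described, matches the paper's argument.
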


\begin{rmk}
\label{remark:3.6}
\begin{itemize}
\item[(i)] $\mathcal{C}(3)$ and $\mathcal{N}(3)$ can be obtained and their numbers are listed in Table A.4.

\item[(ii)] The computational results reveal that the maximum orders $m$ of $\mathbf{T}_{m}(B)$ and $\mathbf{V}_{m}(B)$ in applying Proposition \ref{proposition:2.2} are $m=35$ and $m=13$, respectively. More precisely, for $B'=\{2,5,13,36,53,60,62,64,77\}$, $\mathbf{T}_{35}(B')$ is not nilpotent but $\mathbf{T}_{k}(B')$ is nilpotent for $1\leq k\leq 34$. On the other hand, for $B''=\{2,4,5,6,9,13,14,16,18,27,32,39,60,67,78,79\}$, $\mathbf{V}_{13}(B'')$ is nilpotent and $\mathbf{V}_{k}(B'')$ are not nilpotent for any $1\leq k\leq12$. The analytic proof that these numbers are maximal is not available. A prior estimate of the upper bound of $m$ does not exist.
\end{itemize}

\end{rmk}

For completeness, Tables A.4 and A.5 give the numbers of minimal cycle generators and maximal non-cycle generators.

\section*{Appendices }

\section*{A.1}
Table A.1 presents the numbers of $\mathcal{D}_{j}$ and $[\mathcal{D}_{j}]$, $j=1,2$. Denote by
\begin{equation*}
g_j (k)=\text{ the number of }\mathcal{D}_{j} \text{ with }k \text{ tiles}
\end{equation*}
and
\begin{equation*}
\bar{g}_j (k)=\text{ the number of }[\mathcal{D}_{j}] \text{ with }k \text{ tiles}
\end{equation*}
for $1\leq j\leq 2$ and $1\leq k\leq18$.

\newpage

\begin{equation*}
\begin{tabular}{|l|c|c|c|c|}
\hline
$k$ &  $g_1 (k)$ &  $\bar{g}_1 (k)$  & $g_2 (k)$ & $\bar{g}_2 (k)$  \\ \hline
1 & 36 & 1 & 36 & 1 \\ \hline
2 & 576 & 8& 612& 8\\ \hline
3 & 5304& 31& 6504& 34\\ \hline
4 & 31032& 146& 47988& 219\\ \hline
5 & 122184& 475& 256320& 971\\ \hline
6 & 342204& 1290& 998136& 3692\\ \hline
7 & 711288& 2581& 2812752& 10043\\ \hline
8 & 1129896& 4092& 5771988& 20554\\ \hline
9 & 1397892& 5005& 8886612& 31338\\ \hline
10 & 1361448& 4903& 10558368& 37319\\ \hline
11 & 1047816& 3763& 9807336& 34539\\ \hline
12 & 635580& 2321& 7125612& 25253\\ \hline
13 & 300888& 1106& 4007484& 14203\\ \hline
14 &109080 & 423& 1708632& 6162\\ \hline
15 & 29304& 118& 533664& 1945\\ \hline
16 &5508 & 28& 115164& 453\\ \hline
17 & 648& 4& 15336& 65\\ \hline
18 & 36& 1& 948& 8\\
\hline
\end{tabular}%
\end{equation*}

\begin{equation*}
\text{Table A.1}
\end{equation*}

\section*{A.2}
Table A.2 presents the equivalence classes of the minimal cycle generators in $G_{1}$ and $G_{2}$.
\begin{equation*}
\begin{tabular}{|c|c|}
\hline
$k$&   $[C]\in[\mathcal{C}(G_1)]$ with $k$ tiles  \\
\hline
$2$ & $[\{  2 , 10  \}]$ \\
\hline
& $[\{   2 , 40  \}]$ \\
\hline
$3$ & $[\{  2 , 12 ,19  \}]$ \\
\hline
 & $[\{  2 , 12,49  \}]$ \\
\hline
 & $[\{  2 , 42,79  \}]$ \\
\hline
\end{tabular}%
\end{equation*}
\begin{equation*}
\text{Table A.2 (a) }
\end{equation*}

\newpage

\begin{equation*}
\begin{tabular}{|c|c|}
\hline
$k$ &   $[C]\in[\mathcal{C}(G_2)]$ with $k$ tiles  \\
\hline
$2$ & $[\{  5 , 37  \}]$ \\
\hline
 $3$ & $[\{  5,45,73  \}]$ \\
\hline
 $4$ & $[\{  5 , 13,  30  ,46 \}]$ \\
\hline
 & $[\{   5 , 16,  30 , 73
  \}]$ \\
\hline
 & $[\{    5  ,16 , 39 , 74
 \}]$ \\
\hline
 & $[\{    5,   9  ,46 , 64
 \}]$ \\
\hline
& $[\{   5  ,13  ,35 , 64
  \}]$ \\
\hline
$5$ & $[\{    5  ,13 , 30 , 43 , 74\}]$ \\
\hline
 & $[\{    5  ,15 , 30 , 52 , 73
  \}]$ \\
\hline
 & $[\{    5 ,  9  ,39 , 46 , 76
  \}]$ \\
\hline
 & $[\{    5 ,  6,  35,  52  ,64
  \}]$ \\
\hline
& $[\{    5 , 15,  35 , 46 , 66
  \}]$ \\
\hline
$6$ & $[\{    5 , 13 , 35,  43  ,57 , 73
  \}]$ \\
\hline
 & $[\{    5  , 6  ,43 , 53 , 66  ,73
  \}]$ \\
\hline
& $[\{    5 , 13 , 36  ,53 , 66 , 73
 \}]$ \\
\hline
 & $[\{   5 ,  9  ,39 , 52  ,67  ,74
  \}]$ \\
\hline
 & $[\{   5  , 9  ,39 , 43 ,74 , 76
  \}]$ \\
\hline
& $[\{    5 , 13  ,30  ,43  ,47,  64
 \}]$ \\
\hline
& $[\{   5  , 6 , 13,  43  ,47  ,66
  \}]$ \\
\hline
& $[\{    5  , 9  ,13  ,25  ,39  ,74
 \}]$ \\
\hline
 & $[\{   5  , 9  ,13 , 30,  47 , 64
  \}]$ \\
\hline
 & $[\{   5  , 6  ,16  ,47  ,57  ,64
  \}]$ \\
\hline
& $[\{    5 ,  9 , 16,  53 , 66 , 74
  \}]$ \\
\hline
 & $[\{    5 ,  9  ,13 , 39,  53,  74
  \}]$ \\
\hline
& $[\{   5 ,  6 , 13 , 30,  52 , 73
  \}]$ \\
\hline
 & $[\{   5 ,  9,  15 , 43,  60,  74
  \}]$ \\
\hline
 & $[\{    5  ,13 , 35 , 45 , 66 , 74
  \}]$ \\
 \hline
$7$ & $[\{   5  , 9 , 13 , 30 , 52 , 64  ,74
  \}]$ \\
\hline
 & $[\{   5  , 9 , 13  ,47 , 52 , 57 , 64
  \}]$ \\
\hline
& $[\{    5  , 6 , 16 , 36 , 53,  66 , 73
  \}]$ \\
\hline
 & $[\{   5  , 9  ,16 , 39,  47 , 69 , 76
  \}]$ \\
\hline
 & $[\{   5  , 6 , 13 , 35 , 43,  66,  73
 \}]$ \\
\hline
& $[\{   5  , 6 , 16,  35,  39 , 47  ,76
  \}]$ \\
\hline
& $[\{    5 ,  9  ,13  ,30  ,52,  56  ,64
 \}]$ \\
\hline
& $[\{   5 ,  6 , 16 , 35 , 36 , 57  ,73
 \}]$ \\
\hline
 & $[\{    5  , 9 , 15 , 22 , 46,  56 , 66
 \}]$ \\
\hline
 & $[\{    5  ,15 , 25  ,35  ,45  ,64  ,74
  \}]$ \\
\hline
$8$ & $[\{    5  , 9  ,13  ,26 , 35  ,43  ,57 , 74
  \}]$ \\
\hline
 & $[\{    5   ,9 , 13  ,35  ,39 , 52 , 74  ,76
  \}]$ \\
\hline
& $[\{    5,   9  ,13 , 45 , 47  ,52 , 56 , 64
  \}]$ \\
\hline
\end{tabular}%
\end{equation*}
\begin{equation*}
\text{Table A.2 (b) }
\end{equation*}

\section*{A.3}
Table A.3 shows the equivalence classes of maximal non-cycle generators with $36$ tiles.

\begin{equation*}
\footnotesize{
\begin{tabular}{|c|}
\hline
1.\hspace{0.2cm} $\begin{array}{ll}[\{  2 , 3,  4,  5 , 6  ,7 , 8 , 9 ,12 ,13 ,14, 15, 16, 17 ,18 ,22 ,23 ,24 , 25 ,\\ \hspace{0.2cm}26 ,27 , 32 , 33, 34 ,35 ,36 ,42 ,43 ,44 ,45 ,52 ,53, 54, 62 ,63, 72
\}]\end{array}$   \\
\hline
2.\hspace{0.2cm} $\begin{array}{ll}[\{  2 , 3,  4,  5 , 6 , 7 , 8  ,9 ,12, 13 ,14, 15 ,16, 17, 18, 22 ,23, 24, 25, \\ \hspace{0.2cm}26, 27, 32 ,33 ,34 ,35 ,36 ,42, 43, 44, 45 ,53, 54, 60, 62 ,63 ,72
\}]\end{array}$   \\
\hline
3.\hspace{0.2cm} $\begin{array}{ll}[\{  2 , 3  ,4 , 5,  6,  7 , 8 , 9, 12 ,13 ,14 ,15 ,16, 17 ,18, 22 ,23 ,24, 25, \\ \hspace{0.2cm}26, 27, 32 ,33, 34 ,35, 36 ,42 ,43 ,44 ,45, 54, 60, 62 ,63 ,69 ,72
\}]\end{array}$   \\
\hline
4.\hspace{0.2cm} $\begin{array}{ll}[\{  2  ,3  ,4 , 5,  6 , 7 , 8 , 9, 12 ,13 ,14 ,15, 16 ,17 ,18, 22, 23 ,24 ,25 ,\\ \hspace{0.2cm}26 ,27, 32 ,33 ,34, 35, 36, 42 ,44 ,45, 54 ,59, 60 ,62 ,63 ,69 ,72
\}]\end{array}$   \\
\hline
5.\hspace{0.2cm} $\begin{array}{ll}[\{  2,  3 , 4 , 5,  6,  7 , 8  ,9 ,12, 13, 14 ,15, 16, 17 ,18 ,22, 23 ,24 ,26, \\ \hspace{0.2cm}27 ,32, 33, 35 ,36, 42, 45 ,57, 58, 59 ,60, 62, 63 ,68 ,69, 72 ,78
\}]\end{array}$   \\
\hline
6.\hspace{0.2cm} $\begin{array}{ll}[\{  2 , 3,  4,  5,  6,  7,  8,  9 ,12 ,13, 14, 15, 16 ,17 ,18 ,22 ,23, 24, 26, \\ \hspace{0.2cm}27, 32 ,33 ,35 ,36 ,42 ,57, 58 ,59 ,60 ,62 ,63 ,68 ,69, 72 ,77, 78
\}]\end{array}$   \\
\hline
7.\hspace{0.2cm} $\begin{array}{ll}[\{  2 , 3,  4,  5,  6,  7 , 8  ,9 ,12 ,13 ,14 ,15, 16 ,17 ,18 ,22 ,23, 24, 26 ,\\ \hspace{0.2cm}27 ,32 ,33 ,36 ,42 ,57, 58 ,59, 60 ,62, 63 ,67 ,68, 69 ,72, 77 ,78
\}]\end{array}$   \\
\hline
8.\hspace{0.2cm} $\begin{array}{ll}[\{  2  ,3  ,4 , 5 , 6 , 7 , 8 , 9, 12, 13 ,14, 15 ,16, 17 ,18, 22 ,23 ,24, 27,\\ \hspace{0.2cm} 32 ,33 ,36, 42, 45, 57, 58 ,59, 60 ,62, 63 ,66 ,67, 68 ,69, 72 ,78
\}]\end{array}$   \\
\hline
\end{tabular}%
}
\end{equation*}

\begin{equation*}
\text{Table A.3}
\end{equation*}

\section*{A.4}
Table A.4 shows the numbers of $\mathcal{C}(3)$ and $\mathcal{N}(3)$. Firstly, denote by
\begin{equation*}
\left\{
\begin{array}{l}
\mathcal{C}_{3}(k)= \left\{B\in \mathcal{C}(3) \hspace{0.1cm}: \hspace{0.1cm}|B|=k \right\}, \\
\mathcal{N}_{3}(k)= \left\{N\in \mathcal{N}(3) \hspace{0.1cm}:\hspace{0.1cm}|N|=k \right\},\\
\mathcal{C}_{3,e}(k)= \left\{[B]\in [\mathcal{C}(3)] \hspace{0.1cm}:\hspace{0.1cm} |B'|=k \text{ for all }B'\in [B] \right\}, \\
\mathcal{N}_{3,e}(k)= \left\{[N]\in [\mathcal{N}(3)] \hspace{0.1cm}: \hspace{0.1cm} |N'|=k \text{ for all }N'\in [N] \right\}.
\end{array}
\right.
\end{equation*}
Clearly, from Proposition 3.1, $\mathcal{C}(3)=\underset{k=1}{\overset{36}{\bigcup}}\mathcal{C}_{3}(k)$ and $\mathcal{N}(3)=\underset{k=1}{\overset{36}{\bigcup}}\mathcal{N}_{3}(k)$. Only the cases for $\mathcal{C}_{3}(k)\neq\emptyset$ and $\mathcal{N}_{3}(k)\neq\emptyset$ are listed.

\begin{equation*}
\begin{tabular}{|l|c|c|}
\hline
$k$ &   $|\mathcal{C}_{3}(k)|$ & $|\mathcal{C}_{3,e}(k)|$  \\
\hline
10  & 2880 & 10  \\
\hline
9 & 84600& 301  \\
\hline
8& 305388 & 1094 \\
\hline
7  & 264384 & 952\\
\hline
6 & 105012& 406  \\
\hline
5  & 21060& 102 \\
\hline
4 & 3672 & 29 \\
\hline
3  & 528 & 8\\
\hline
2 & 72 & 3 \\
\hline
1 & 9 & 1 \\
\hline
\end{tabular}%
\end{equation*}
\begin{equation*}
\text{Table
A.4.(a)}
\end{equation*}

\begin{equation*}
\begin{tabular}{|l|c|c|}
\hline
$k$ &  $|\mathcal{N}_{3}(k)|$& $|\mathcal{N}_{3,e}(k)|$  \\
\hline
36  & 1296 & 8 \\
\hline
34  & 720 & 3\\
\hline
32& 1152  & 4 \\
\hline
31  & 3168 & 11\\
\hline
30  & 576 & 2\\
\hline
29  & 288 & 1\\
\hline
28  & 3168 & 12\\
\hline
27  & 3456& 12 \\
\hline
26 & 6048 & 21 \\
\hline
25 & 5760 & 20 \\
\hline
24 & 5184& 18  \\
\hline
23 & 6624& 23  \\
\hline
22 & 8640 & 30 \\
\hline
21 & 12672& 44  \\
\hline
20 & 20160 & 70 \\
\hline
19  & 35280& 123 \\
\hline
18& 50256  & 175 \\
\hline
17  & 90000 & 313\\
\hline
16 & 93024 & 324 \\
\hline
15  & 108720& 379 \\
\hline
14  & 120384 & 422\\
\hline
13  & 148536& 522 \\
\hline
12  & 163512 & 576\\
\hline
11 & 157536& 556  \\
\hline
10  & 186480& 657 \\
\hline
9 & 133200 & 483 \\
\hline
8  & 42624 & 156\\
\hline
7  & 2160& 9 \\
\hline
\end{tabular}%
\end{equation*}
\begin{equation*}
\text{Table A.4 (b)}
\end{equation*}

\section*{Acknowledgments}
The authors want to thank Prof. Wen-Wei Lin for suggesting the use of the concept of nilpotence to identify cycle and non-cycle generators.

\end{document}